\def\bord{\partial}
\let\bydef\emph
\newtheorem{theo}{Theorem}[section]
\newtheorem{prop}[theo]{Proposition}
\newtheorem{lem}[theo]{Lemma}
\theoremstyle{definition}
\newtheorem{defi}[theo]{Definition}
\newtheorem{example}[theo]{Example}
\newtheorem{rem}[theo]{Remark}
\def\rr{\mathbf{R}}
\def\nn{\mathbf{N}}
\def\calx{\mathcal{X}}
\def\RR{\mathrm{R}}
\def\AA{\mathrm{A}}
\def\BB{\mathrm{B}}
\def\oo{\mathrm{O}}
\title{A class of open surfaces with algorithmically solvable homeomorphism problem}
\author{Sylvain Maillot}
\begin{document}

\maketitle

\begin{abstract}
We introduce a new class of possibly noncompact $n$-dimensional manifolds without boundary associated to finite data which we call topological automata. This class is large enough to contain many interesting examples of open 2-dimensional and 3-dimensional manifolds of interest to low-dimensional topologists. Our main result is that the homeomorphism problem in this class is decidable for $n=2$.\footnote{Partially supported by the Agence Nationale de la Recherche through Grant GTO ANR-12-BS01-0014.}
\end{abstract}

\section{Introduction}

A fundamental question in manifold topology is the \emph{homeomorphism problem}, which asks, in a given dimension $n$, whether there exists an algorithm to decide if two compact triangulated $n$-dimensional manifolds are homeomorphic. It is easy to show that the answer is yes for $n=1$ and $n=2$, using the classification of these manifolds. For $n=3$, the answer, although apparently not known in full  generality (see the comments in~\cite{afw:decision}), is yes for orientable, irreducible 3-manifolds. This relies on Perelman's geometrization theorem (\cite{Per1,Per3,Per2}, see also~\cite{kl:notes,Mor-Tia2,Cao-Zhu,b3mp:book}.) When $n\ge 4$, by a theorem of A.~A.~Markov~\cite{markov:homeomorphy} the answer is negative (see also~\cite{bhp:unsolvable}.) 

When the manifolds under consideration are no longer assumed to be compact, a difficulty arises: in each dimension $n\ge 2$ there are uncountably many open $n$-manifolds up to homeomorphism (see Appendix~\ref{sec:appendix}.)

In this article, we introduce for each $n$ a new class of possibly noncompact $n$-manifolds, called \emph{automata $n$-manifolds}, for which the homeomorphism problem makes sense. Our main result, Theorem~\ref{thm:main} below, states that this problem is algorithmically solvable for $n=2$.

\newpage

We first introduce the notion of \bydef{topological $n$-automaton}, which is a finite data set from which an automaton $n$-manifold will be constructed. The definition is as follows.

\begin{defi}
Let $n\ge 1$ be an integer. A \bydef{topological $n$-automaton} is a triple $\calx = (
(X_0,\ldots,X_p), (C_1,\ldots,C_p),(f_1,\ldots,f_q))$ where
\begin{itemize}
\item $p,q$ are nonnegative integers;
\item for each $k$, $X_k$ is a compact connected triangulated $n$-dimensional manifold-with-boundary, called a \bydef{building block}, or a \bydef{state};
\item for each $k\ge 1$, $C_k$ is a connected component of $\bord X_k$, hereafter called the \bydef{incoming boundary component} of $X_k$. The other boundary components of the building blocks, including all boundary components of $X_0$, if any, are called the \bydef{outcoming boundary components};
\item for each $i$, there exist $k,l$ with $k\le l$ such that
$f_i$ is a simplicial homeomorphism from some outcoming boundary component of  $X_k$ onto $C_l$; $f_i$ is called an \bydef{arrow};
\end{itemize}
subject to the condition that every outcoming boundary component is the domain of exactly one arrow.
\end{defi}

An example of topological $2$-automaton is given in Figure~\ref{fig:automaton}.

\begin{figure}[ht]
\begin{center}
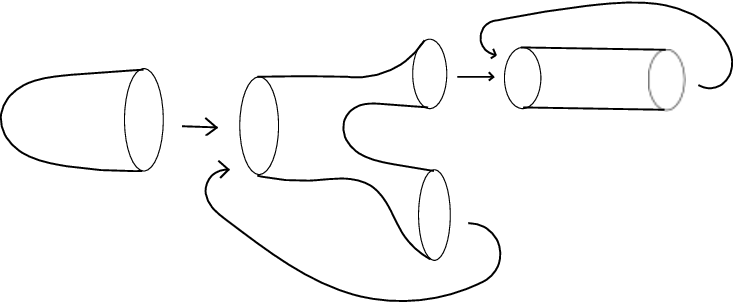
\end{center}
\caption {\label{fig:automaton} A topological $2$-automaton $\calx$}
\end{figure}

\begin{rem}
In order for decision problems about topological automata to be well-defined, one needs to be a bit more precise: letting $\calx$ be a topological $n$-automaton, fix a triangulation of each building block $X_k$, i.e.~a homeomorphism between $X_k$ and the geometric realization of a simplicial complex $A_k$, in such a way that each $f_i$ is conjugate to a simplicial map between some subcomplexes of the $A_k$'s. Then $\calx$ can be encoded by finite data that can be manipulated using one's favorite mathematical software.
\end{rem}

To any topological $n$-automaton $\calx$ we associate an $n$-dimensional manifold without boundary $M(\calx)$ by starting with $X_0$ and `following the arrows' to attach copies of the various building blocks. The idea is simple, but the formal definition is rather awkward, since we must specify an order in which the building blocks are attached, and keep track of the copies of the building blocks that have already been used. To this effect, note that by the last condition, the ordering of the $f_i$'s induces a linear ordering $L$ on the set of outcoming boundary components.

Let $\calx$ be a topological $n$-automaton. We first define inductively a sequence of triples $\{(M_s,\phi_s,L_s)\}_{s\in\nn}$ where for each $s$, $M_s$ is a compact $n$-manifold-with-boundary, $\phi_s$ is a collection of homeomorphisms from the components of $\bord M_s$ to outcoming boundary components of $\calx$, and $L_s$ is a linear ordering of the components of $\bord M_s$. For each $1\le k\le p$ we generate a sequence of copies of $X_k$ by setting $X_k^m := X_k\times \{m\}$ for each nonnegative integer $m$. By abuse of notation, we still denote by $f_i$ the homeomorphisms between the various components of the $\bord X_k^m$'s induced by $f_i$. We define $M_0$ to be $X_0$, $\phi_0$ to be the set of identity maps of the components of $\bord X_0$, and $L_0$ is induced by $L$.

Assume that the triple $(M_s,\phi_s,L_s)$ has been defined. Take the first component $C$ of $\bord M_s$, as given by $L_s$. Using $\phi_s$, we can identify $C$ to some outcoming boundary component of some $X_k$. Then by hypothesis there is a unique $f_i$ with domain $C$. Let $C_{k'}$ be the range of $f_i$. Take the first copy of $X_{k'}$ which has not already been used in the construction, and attach it to $M_s$ along $f_i$. Then repeat the operation for all components of $\bord M_s$, the order being determined by $L_s$. The resulting manifold is $M_{s+1}$. The set of homeomorphisms $\phi_{s+1}$ is determined by the identification of each added manifold with some building block. The ordering $L_{s+1}$ is deduced from $L_s$ and $L$ in a lexicographical manner.

By construction, there is a natural inclusion map from $M_s$ to $M_{s+1}$. Thus the $M_s$'s form a direct system. We define $M(\calx)$ as the direct limit of this system. See Figure~\ref{fig:develop} for an example.

We say that an $n$-manifold $M$ is an \bydef{automaton $n$-manifold} if it is homeomorphic to $M(\calx)$ for some topological $n$-automaton $\calx$.

\begin{figure}[ht]
\begin{center}
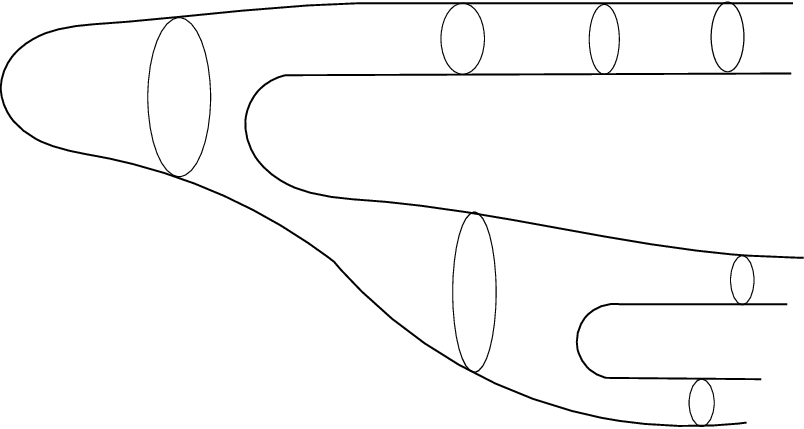
\end{center}
\caption {\label{fig:develop} The surface associated to $\calx$}
\end{figure}

\begin{rem}
The requirement that the building blocks be connected implies that $M(\calx)$ is connected, even if the graph underlying the automaton is disconnected. In fact, any vertex which is not accessible from $X_0$ can be discarded without changing the outcome of the construction. For example, consider a topological $2$-automaton $\calx$ with three states $X_0,X_1,X_2$, such that  $X_0$ is a disk, $X_1,X_2$ are annuli, and there are three arrows: one from the only boundary component of $X_0$ to the incoming boundary component of $X_1$, one from the outcoming boundary component of $X_1$ to the incoming boundary component of $X_1$, one from the outcoming boundary component of $X_2$ to the incoming boundary component of $X_2$. Then $M(\calx)$ is a plane obtained from stacking a copy of $X_0$ and a sequence of copies of $X_1$; the state $X_2$ plays no role at all.
\end{rem}

\begin{example}\label{ex:tame}
Let $M$ be an $n$-manifold that admits a manifold compactification, i.e.~there exists a compact manifold $\bar M$ such that $M$ is homeomorphic to $\bar M \setminus \bord {\bar M}$. Let $N_1,\ldots,N_p$ be the boundary components of $\bar M$. Then $M$ can be presented by a topological $n$-automaton with $p+1$ states as follows: $X_0=\bar M$, and for every $k\ge 1$, the state $X_k$ is $N_k\times [0,1]$ and the arrows are the obvious ones. In particular, 
\begin{enumerate}
\item[a)] Every closed $n$-manifold is an automaton manifold;
\item[b)] Every $1$-manifold is an automaton manifold;
\item[c)] Every finite-type surface is an automaton $2$-manifold.
\end{enumerate}
\end{example}

In this paper we are concerned with $n=2$, i.e.~surfaces. For us, the interesting case is that of surfaces of infinite type. These have been studied at least since the 1970s since they appear as leaves of foliations on compact 3-manifolds (see e.g.~\cite{ps:leaves,cc:endsets,ghys:generique}.) They are also studied from the point of view of Teichmüller theory (see e.g.~\cite{lp:infinite,matsuzaki:infinite} and the references therein) and translation surfaces \cite{randecker:wild}.

Recently they have been the subject of much attention through the study of their mapping class groups; we refer to the survey by Aramayona and Vlamis~\cite{av:big}.

Our main theorem is the following:

\begin{theo}\label{thm:main}
There is an algorithm which takes as input two topological $2$-automata $\calx_1,\calx_2$ and decides whether $M(\calx_1)$ and $M(\calx_2)$ are homeomorphic.
\end{theo}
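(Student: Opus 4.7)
By the Kerékjártó--Richards classification, a connected, paracompact, boundaryless surface $S$ is determined up to \homeo\ by its orientability class, its genus $g\in\nn\cup\{\infty\}$ (and, in the non-orientable case, its cross-cap count), and its \emph{end triple} $(E(S),E'(S),E''(S))$, where $E(S)$ is the Freudenthal space of ends, $E'(S)\subseteq E(S)$ is the closed subspace of ends with infinite-genus neighbourhoods, and $E''(S)\subseteq E'(S)$ is the closed subspace of ends with non-orientable neighbourhoods, compared up to \homeo\ of end spaces preserving the distinguished subsets. So the proof of Theorem~\ref{thm:main} splits into two parts: (i) extract these invariants from $\calx$, and (ii) decide their equivalence algorithmically.

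For (i), I associate to $\calx$ the finite directed graph $G(\calx)$ with vertices $X_0,\dots,X_p$ and an edge $X_k\to X_l$ for each arrow whose source lies in $\bord X_k$ and whose range is $C_l$; after discarding vertices unreachable from $X_0$, the construction of $M(\calx)$ unrolls $G(\calx)$ into a rooted infinite tree $T(\calx)$ of type-labelled pieces, and the ends of $M(\calx)$ correspond canonically to the infinite rooted paths in $T(\calx)$, equivalently to the infinite walks in $G(\calx)$ starting at $X_0$. Since the development is a tree and every gluing is along a single simple closed curve, $M(\calx)$ is orientable iff every reachable $X_k$ is orientable, and its genus is the sum of the genera of the pieces of the development; finiteness of this sum is a standard strongly-connected-component (SCC) check, and in the finite case the value is obtained by enumerating finitely many pieces (the non-orientable case is analogous with cross-caps). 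To encode the end triple, call $X_k$ an \emph{$E'$-vertex} if the subtree of $T(\calx)$ rooted at any type-$X_k$ node contains infinitely many positive-genus pieces---equivalently, if some positive-genus vertex is reachable from $X_k$ through a cycle of $G(\calx)$---and define \emph{$E''$-vertices} similarly for non-orientable blocks. A direct neighbourhood-basis argument then shows that an end given by a path $v_0v_1v_2\dots$ lies in $E'$ iff every $v_n$ is an $E'$-vertex, and similarly for $E''$, so $E'$ and $E''$ are realised as the infinite-path subspaces of the induced subgraphs of $E'$- and $E''$-vertices inside $G(\calx)$.

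The main obstacle is step (ii): deciding when two such labelled graph-path triples yield \homeoic\ labelled end triples. The resulting triples are always compact, totally disconnected and metrizable, and being generated by a finite graph they belong to a ``sofic'' class in which a Cantor--Bendixson analysis terminates after boundedly many steps. My plan is iterative derivation: show that the derived set of the infinite-path space of a labelled finite graph is itself effectively computable as the infinite-path space of a new labelled finite graph (obtained by pruning vertices whose subtrees contribute only isolated paths), and iterate until the perfect kernel is reached---either empty, or a labelled Cantor space whose label pattern is read off the SCC structure. Two end triples then agree iff the finite sequences of derived labelled graphs and their labelled perfect kernels match up to relabelling, which can be tested by minimising the labelled graphs to canonical form. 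The technical heart of the proof lies in verifying that the derivation operation stays within the class of labelled finite-graph path spaces and can be computed effectively from $G(\calx)$.
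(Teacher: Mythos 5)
Your overall strategy agrees with the paper's: both reduce Theorem~\ref{thm:main} to the Kerékjártó--Richards classification, extract genus and orientability by finite graph analysis (the $k\le l$ condition makes the loop-free part of $G(\calx)$ a DAG, so this is sound), and then reduce the problem to an algorithmic comparison of end triples read off from a finite combinatorial object. Up to that point the proposal is essentially the paper's Section~\ref{sec:reduction}, though the paper collapses the loop structure to a finite decorated \emph{tree} $T(\calx)$ with types in $\{*,O,\Theta\}$ (resp.\ the larger set $S$ in Section~\ref{sec:general}) rather than working directly with the graph.

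The gap is in the final step, which you yourself flag as ``the technical heart.'' You propose to iterate the Cantor--Bendixson derivative on the path space, claiming it remains the path space of a finite labelled graph, and then to declare two triples equivalent iff the sequences of derived labelled graphs and their perfect kernels match up to relabelling. Two things are missing. First, the plain CB derivative is too coarse for this class of spaces: when a layer of isolated points accumulates densely on a clopen Cantor subset, the CB derivative sequence stabilises at that Cantor set and never empties, and the derivative alone does not record \emph{how} the removed points accumulate (singly, densely, on a Cantor subset, etc.). The paper replaces this with a finer stratification (the sets $L_p$ in Section~\ref{sec:planar}): at each stage one is allowed to peel off whole clopen Cantor pieces, not just isolated points, and one simultaneously carries an equivalence relation that records the types of sequences converging to each stratum. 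That refined ``depth'' does terminate in finitely many steps on these spaces, whereas yours does not. Second, your concluding equivalence (``two end triples agree iff the finite sequences of derived labelled graphs \dots\ match'') is precisely the content of the paper's Theorem~\ref{thm:reduced}/\ref{thm:reduced general}, which is the real work: the paper proves it by introducing the reduction moves (Moves~1--3), showing every admissible tree has a reduced form, and then establishing in Proposition~\ref{prop:key} that depth is \emph{strictly} decreasing down a reduced tree and that topological equivalence of vertices forces isomorphism of the subtrees below them. You assert the analogous completeness statement for your canonical graph forms without an argument. As it stands, you have reformulated the problem rather than solved it; the reformulation is sensible, but the decisive combinatorial lemma and the correct notion of ``derivative'' are what would need to be supplied.
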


The proof of Theorem~\ref{thm:main} relies essentially on the classification theorem for possibly noncompact surfaces, due to Ker\'ekj\'art\'o and Richards~\cite{richards:noncompact}. We now recall the statement of this theorem.

Let $M$ be a connected, possibly noncompact surface without boundary. The classical invariants which are used in the statement of the classification theorem are the genus $g(M)\in \nn \cup \{\infty\}$, the orientability class $o(M)$, and the triple $(E(M),E'(M),E''(M))$ where $E(M)$ is the space of ends of $M$, $E'(M)$ is the closed subspace of nonplanar ends, and $E''(M)$ is the closed subspace of nonorientable ends. Two such triples $(E,E',E'')$ and $(F,F',F'')$ are called \bydef{equivalent} if there is a homeomorphism $f:E\to E'$ such that $f(E')=F'$ and $f(E'')=F''$. Here is the statement of the classification theorem:

\begin{theo}[Classification of surfaces, Ker\'ekj\'art\'o, Richards~\cite{richards:noncompact}]
Let $M_1$ and $M_2$ be two connected surfaces without boundary. Then $M_1$ and $M_2$ are homeomorphic if and only if they have same genus and orientability class, and 
$(E(M_1),E'(M_1),E''(M_1))$ is equivalent to $(E(M_2),E'(M_2),E''(M_2))$.
\end{theo}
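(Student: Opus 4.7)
The forward direction is immediate: a homeomorphism $h\colon F_1\to F_2$ induces a homeomorphism of the triples of end spaces (the ends, and the closed subspaces of non-planar and of non-orientable ends, are defined purely topologically, in terms of inverse limits of complements of compact sets and whether the components eventually contain handles or M\"obius bands), and clearly preserves genus and orientability class. The content of the theorem lies in the converse.

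For the converse, my plan is to follow the classical exhaustion strategy. Given a surface $F$, one first constructs an exhaustion $F=\bigcup_{s\ge 0} K_s$ by compact sub-surfaces-with-boundary such that $K_s\subset\Int(K_{s+1})$, each $K_s$ has finite topological type, and every component of $F\setminus\Int(K_s)$ is unbounded (hence has at least one end). Such an exhaustion exists because $F$ is paracompact and second countable. The ends of $F$ are then in natural bijection with $\varprojlim_s\pi_0(F\setminus\Int(K_s))$, carrying the inverse-limit topology, and an end is non-planar (resp.\ non-orientable) precisely when the nested components in its inverse system eventually all contain a handle (resp.\ a M\"obius band). By possibly enlarging the $K_s$, one may arrange that each piece $\overline{K_{s+1}\setminus K_s}$ has a simple normal form: a disjoint union of annuli, pairs-of-pants and M\"obius bands, with at most finitely many additional handles distributed among the components.

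Suppose now $F_1,F_2$ have matching invariants, with a homeomorphism $\psi\colon(E(F_1),E'(F_1),E''(F_1))\to(E(F_2),E'(F_2),E''(F_2))$. I would build exhaustions $\{K_s^{(1)}\}$, $\{K_s^{(2)}\}$ and homeomorphisms $h_s\colon K_s^{(1)}\to K_s^{(2)}$ extending $h_{s-1}$ by induction on $s$. At stage $s+1$, the map $\psi$ induces, after sufficient refinement, a bijection between the components of $F_1\setminus K_s^{(1)}$ and those of $F_2\setminus K_s^{(2)}$ preserving the number of ends they contain together with their genus and orientability characters. I then choose $K_{s+1}^{(i)}$ so that, component by component, the cobordism $\overline{K_{s+1}^{(i)}\setminus K_s^{(i)}}$ has the same topological type on both sides. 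The classification of compact surfaces-with-boundary (applied to each cobordism piece, with boundary identifications prescribed by $h_s$ on the inner side) then supplies an extension $h_{s+1}$. The direct limit $h=\varinjlim h_s\colon F_1\to F_2$ is a homeomorphism because every point of $F_1$ eventually lies in some $K_s^{(1)}$.

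The main obstacle is to organise the induction so that the matching of compact pieces at every finite stage is compatible both with the previously constructed $h_s$ and with the prescribed end matching $\psi$. The delicate point is that each complementary component of $K_s^{(1)}$ inherits, from $\psi$, a list of ends carrying possibly infinite total genus or infinitely much non-orientability, and its partner complementary component of $K_s^{(2)}$ must carry exactly the same invariants. This forces a back-and-forth argument in the spirit of the classical proof of Cantor's theorem on countable dense linear orders: at each step one commits only finitely much genus and non-orientability, while leaving enough in reserve that the topology of $E'(F_i)$ and $E''(F_i)$ inside each end-neighbourhood is correctly matched in the limit. Once this bookkeeping is in place, every finite step reduces to the classification of compact surfaces with boundary, which is standard.
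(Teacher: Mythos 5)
This statement is not proved in the paper at all: it is quoted as a known theorem and attributed to Kerekjarto and Richards, with the proof delegated to the cited reference. So there is no internal argument to compare yours with; what can be assessed is whether your outline would stand on its own. Its overall shape (exhaustion by compact subsurfaces of finite type, matching of complementary components guided by a homeomorphism $\psi$ of the end triples, extension stage by stage using the classification of compact surfaces with boundary, passage to the direct limit) is indeed the classical strategy behind Richards' proof, and the forward direction is handled correctly.

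As a proof, however, there is a genuine gap: the step you yourself label ``the main obstacle'' is the entire content of the theorem, and you assert rather than execute it. Two concrete problems. First, the claim that, after refinement, $\psi$ induces a bijection of complementary components ``preserving genus and orientability characters'' is not automatic: the genus of a complementary component is not an invariant of its ends (a component all of whose ends are planar can still carry arbitrarily large finite genus), so before any matching one must normalize the exhaustions --- absorb the equal finite genus and cross-cap number of $F_1,F_2$ into $K_0^{(i)}$, and arrange that every complementary component has genus $0$ or $\infty$ (resp.\ is orientable or not) exactly according to whether it meets $E'$ (resp.\ $E''$); in the infinite-genus case this rests on $E'\neq\emptyset$ and on a handle-pushing/normal-form argument, which is precisely where Richards' canonical form does the work. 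Second, for the limit map to realize the given $\psi$, the component bijection chosen at stage $s+1$ must refine the one at stage $s$ and must be the one dictated by $\psi$ on the corresponding clopen subsets of $E(F_i)$, and the extension $h_{s+1}$ must agree with the boundary parametrization imposed by $h_s$ (realizing a prescribed boundary homeomorphism needs a collar adjustment, not just abstract homeomorphism of the cobordism pieces). Without this normalization and explicit bookkeeping the induction as described cannot be run; with it, your plan becomes essentially Richards' argument, so the proposal is best viewed as a correct strategy sketch rather than a complete proof.
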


Recall that a surface $F$ is \bydef{planar} if every embedded circle in $F$ separates. A planar surface $F$ has genus $0$, is orientable, and $E'(F)=E''(F)=\emptyset$. Thus planar surfaces are classified by their space of ends.

A topological $2$-automaton is called \bydef{planar} if all of its building blocks are planar. This is equivalent to requiring that the associated surface should be planar.

We now give some examples and non-examples of automata surfaces. All of them are orientable, so $E''=\emptyset$. According to~\cite{av:big}, five infinite-type surfaces are sufficiently important to have received names in the literature. They are all automata surfaces:

\begin{example}
\begin{enumerate}
\item[a)] The \emph{Loch Ness monster surface} \cite{ps:leaves} has exactly one end, which is nonplanar. It is presented by an automaton with two states: $X_0$ is a disk; $X_1$ is an annulus with one handle.
\item[b)] \emph{Jacob's ladder surface}\footnote{We follow the terminology of Aramayona and Vlamis~\cite{av:big}, itself borrowed from Ghys~\cite{ghys:generique}; Philips and Sullivan~\cite{ps:leaves} used that name for a Riemannian surface that is homeomorphic to the Loch Ness monster surface, but to quasiisometric to it.} has exactly two ends, both nonplanar. It is presented by an automaton with three states: $X_0$ is an annulus; $X_1$ and $X_2$ are annuli with one handle.
\item[c)] The \emph{Cantor tree surface} (`arbre de Cantor'~\cite{ghys:generique}) is the complement of a Cantor set in $S^2$. It is presented by an automaton with two states: $X_0$ is a disk; $X_1$ is a pair of pants.
\item[d)] The \emph{blooming Cantor tree surface} (`arbre de Cantor fleuri'~\cite{ghys:generique}) has a Cantor set's worth of ends, none of which is planar. It is presented by an automaton with two states: $X_0$ is a disk; $X_1$ is a pair of pants with one handle.
\item[e)] The \emph{flute surface} \cite{basmajian:collar}: it is the planar surface with space of ends the ordinal $\omega+1$. This is the example depicted in Figures~\ref{fig:automaton} and~\ref{fig:develop}.
\end{enumerate}
\end{example}

In addition, the complement of a Cantor set in $\rr^2$ is presented by an automaton with three states: $X_0$ is an annulus; $X_1$ is an annulus; $X_2$ is a pair of pants. This is the example mentioned in a blog post by D.~Calegari and studied in J.~Bavard's PhD thesis~\cite{bavard:big} that sparked interest in `big mapping class groups'. Likewise, the three surfaces depicted on Figure 1 of the paper~\cite{mr:big} by Mann and Rafi are automata surfaces; informally, it is the very idea of a `surface that can be represented by a simple picture' which the notion of automaton surface endeavors to capture.

The simplest example of a surface which is \emph{not} an automaton surface is the planar surface with space of ends $\omega^\omega+1$ (see Example~\ref{ex:infinite} below.) More generally, if $M$ is a planar surface with countably many ends, then $M$ is an automaton surface if and only if each point of $E(M)$ has finite Cantor-Bendixson rank. For similar reasons, Example 2.3 in~\cite{mann:automatic} is not an automaton surface, and neither are most of the examples given in our  Appendix~\ref{sec:appendix}.

The paper is structured as follows: in Sections~\ref{sec:reduction} and~\ref{sec:planar}, we deal with the planar case, i.e.~we give an algorithm to decide whether the surfaces associated to two planar topological $2$-automata are homeomorphic. In Section~\ref{sec:reduction}, we show how to associate to a planar topological $2$-automaton $\calx$ a combinatorial object $T(\calx)$, called an \bydef{admissible tree}, such that the structure of the space of ends of $M(\calx)$ can be read off $T(\calx)$. In Section~\ref{sec:planar}, we introduce the notion of a \bydef{reduced tree}, which is a kind of normal form for $T(\calx)$, and prove Theorem~\ref{thm:main} in the planar case. In Section~\ref{sec:general}, we explain how to generalize this construction in order to prove Theorem~\ref{thm:main} in full generality. Finally, in Appendix~\ref{sec:appendix}, we indicate a construction of uncountably many open surfaces which are pairwise nonhomeomorphic.

\paragraph{Acknowledgements}
The author would like to thank Gilbert Levitt, Eric Swenson, Panos Papazoglu and J\'er\'emie Brieussel for fruitful conversations, and an anynomous referee for useful comments and in particular suggesting adding references.

\section{From automata to admissible trees}\label{sec:reduction}

In this section, we simply call \emph{2-automaton} a planar topological 2-automaton. To any such 2-automaton $\calx$, we shall associate a decorated graph $G(\calx)$ which contains enough information to recover the space of ends of the surface associated to $\calx$. Then we show how to associate to a decorated graph $G$ a decorated tree of a special form, called an \emph{admissible tree}, which also contains this information. This will allow us to reduce the topological classification of planar 2-automata to the classification of admissible trees up to some equivalence relation of topological nature. This latter classification problem will be solved in Section~\ref{sec:planar}.

The structure of this section is as follows: in Subsection~\ref{subsec:general}, we introduce decorated graphs, admissible trees and their associated topological spaces. The reader interested in the motivation can jump directly to Subsection~\ref{subsec:two} and refer back as needed. In Subsection~\ref{subsec:two}, we show how to pass from a 2-automaton to a decorated graph. Finally in Subsection~\ref{subsec:three} we show how to turn a decorated graph into an admissible tree.

\subsection{Decorated graphs and admissible trees}\label{subsec:general}

We introduce three formal symbols $*,\oo,\Theta$. We call \bydef{decorated graph} a pair $(G,f)$, where $G$ is a finite oriented graph endowed with a base vertex, called the \bydef{root}, and $f$ is a map from the set of all vertices of $G$ to the set $\{*,\oo,\Theta\}$, in such a way that the root has image $*$. We also assume that there is no circuit in $G$ as an oriented graph.

For simplicity, we will sometimes simply denote this graph by $G$, the decoration $f$ being understood. We say that a vertex is \bydef{of type} $*$, $\oo$, or $\Theta$ according to its image by $f$. A vertex different from the root is called an \bydef{ordinary vertex}.

We associate to any decorated graph $G$ a topological space $L(G)$ as follows: let $G$ be a decorated graph. Set $r := n_\oo + 2 n_\Theta$, where $n_\oo$ (resp.~$n_\Theta$) is the number of vertices of $G$ of type $\oo$ (resp.~of type $\Theta$). Consider the alphabet $B=\{b_1,\ldots, b_r\}$ and assign to each vertex $v$ of $G$ either one letter (if $v$ is of type $\oo$), or an unordered pair of letters (if $v$ is of type $\Theta$), or nothing. Let $L(G)$ be the set of infinite words $w$ on $B$ that can be obtained according to the following recipe: let $c=v_1\cdots v_l$ be a finite injective path of length at leat $2$ in $G$ starting from the root and respecting the orientations of the edges. For each $k\in \{2,\ldots,l-1\}$, choose a finite (possibly empty) word in the letter(s) assigned to $v_k$; then choose an infinite word in the letter(s) assigned to $v_l$; then $w$ is obtained by concatenating all those words. Finally, $L(G)$ is topologised as a subset of $B^\nn$ endowed with the product topology.

Of course, in this generality it is possible that some of the letters are never used (if some vertex is not accessible from the root.) It is even possible that $L(G)$ is empty. However, the space $B^\nn$ is either empty, a single point, or a Cantor set. Moreover, it is readily checked from the definition that $L(G)$ a closed subset of $B^\nn$. Thus $L(G)$ is compact, metrizable and totally disconnected. We shall see later that when $G$ is obtained from a 2-automaton $\calx$ by a certain construction, then $L(G)$ is homeomorphic to the space of ends of $M(\calx)$.

Next we define a special class of decorated graphs which will play an important role.

\begin{defi}
An \bydef{admissible tree} is a decorated graph which has the following properties:
\begin{enumerate}
\item It is a tree.
\item The only vertex of type $*$ is the root.
\item All edges point away from the root. 
\end{enumerate}
\end{defi}

It follows that the orientations of edges is uniquely determined by the underlying structure, so in the pictures, we will not represent these orientations. The convention is that the edges are oriented from left to right.

We will use some standard terminology concerning rooted trees. The \bydef{leaves} are the degree~1 vertices which are ordinary (i.e.~different from the root.) We use the words `ancestor', `descendant', `son', `father', and `sibling' as in genealogy.

We now give some information on the space $L(T)$ associated to an admissible tree $T$. First, the notion of convergence associated to our topology is that of compact convergence; more precisely, if $w$ is a point of $L(T)$ and $(w_n)$ a sequence of points of $L(T)$, then $(w_n)$ converges to $w$ iff for each $m\in \nn$, we have that the prefix of length $m$ of $w_n$ is eventually equal to the prefix of length $m$ of $w$.

Let $T$ be an admissible tree, and $v$ be an ordinary vertex of $T$. We denote by $L_v$ the subset of $L(T)$ consisting of the words that have infinitely many letters associated to $v$. By definition, $L(T)$ is the disjoint union of the $L_v$'s. If $v'$ is a descendant of $v$, then any $w\in L_v$ is a limit of a sequence $(w_n)$ of points of $L_{v'}$, obtained by taking the $n$ first letters of $w$ and completing by an infinite word in $L_{v'}$.

We denote by $T(v)$ the subtree consisting of $v$ and its descendants. We call this tree the \bydef{subtree generated by} $v$. Note that $T(v)$ is not an admissible tree (or even a decorated graph) since its ``root'' is $v$, which does not have type $*$. However, such subtrees will play an important role in inductive arguments. It follows from the definition of the topology that the union $U$ of the $L_{v'}$'s for $v'\in T(v)$ is an open subset of $L(T)$. In particular, it is a neighborhood of any point of $L_v$. This gives a partial converse to the above observation: if $w\in L_v$ and $(w_n)$ is a sequence converging to $w$, then $w_n$ eventually belongs to $U$.

\subsection{From planar 2-automata to decorated graphs}\label{subsec:two}

Let $\calx$ be a planar 2-automaton. The goal of this subsection is to construct a decorated graph $G(\calx)$ such that $L(G(\calx))$ is homeomorphic to $E(M(\calx))$.

We start by forming the directed graph $G'$ whose vertices are the building blocks and whose edges are the arrows.
An arrow is called a \bydef{loop} if its domain and range lie in the same building block. 

In order to describe the space of ends of the surface $M(\calx)$,
take a finite alphabet $A$ with as many letters as arrows in $\calx$, and assign a letter to each arrow. We give the set $A^\nn$ of infinite words in $A$ the product topology. The space of ends of the surface $M(\calx)$ is homeomorphic to the subset $E\subset A^\nn$ consisting of words which can be read by starting from $X_0$ and following the arrows.

To each prefix $w=a_1\cdots a_n$ of each word in $E$ we can associate the number $k$ such that $w$ leads to $X_k$, and the subset $E_w$ of $E$ of words whose prefix is $w$ and whose other letters correspond to loops around $X_k$. There are three cases: if there is no such loop, then $E_w$ is empty; if there is one, then $E_w$ is a singleton; otherwise, $E_w$ is a Cantor set, no matter how many loops around $X_k$ there are. This follows from the well-known fact that the Cantor set is the only totally disconnected compactum without isolated points. Hence we see that the number of loops around some building block is irrelevant as soon as it is greater than or equal to $2$.

We are now ready to define the decorating graph $G(\calx)$:
there is a vertex of $G(\calx)$ for each building block $X_k$, the root being $X_0$. Edges of $G(\calx)$ correspond to arrows $f_i$ which are not loops. The image of a vertex by $f$ is $*$ (resp.~$\oo$, resp.~$\Theta$) if there is no loop (resp.~one loop, resp.~two or more loops) around the corresponding building block. The condition $k\le l$ in the fourth point of the definition of a topological automaton ensures that the oriented graph $G(\calx)$ does not contain any circuit. (There may, however, exist circuits in $G(\calx)$ as an unoriented graph.)

It follows from the above description of $E(M(\calx))$ that this space is homeomorphic to the space $L(G(\calx))$ defined in Subsection~\ref{subsec:general}.

\subsection{From decorated graphs to admissible trees}\label{subsec:three}

Let $G$ be a decorated graph. We describe a procedure which yields an admissible tree $T$ such that $L(G)$ is homeomorphic to $L(T)$. We do not need to know that $G$ comes from an automaton, although this is the case we are interested in.

The construction of $T$ is done in the following three steps. At each step, one checks that the homeomorphism type of $L(T)$ does not change.

\paragraph{Step 1} If some vertices of $G$ are not accessible from the root, then we remove them. 

\paragraph{Step 2} If $G$ is not a tree, then it contains two distinct injective paths connecting the root to some vertex $y$. We choose a `minimal' such $y$, i.e.~farthest from the root as possible, so that the subgraph $Y$ consisting of $y$ and the vertices accesible from it is a tree.

Let $e_1,\ldots,e_p$ be the edges going `into' $y$. For each $i$, denote by $x_i$ the other end of $e_i$. We construct a new decorated graph by introducing for each $2\le i\le p$ a copy $(Y_i,y_i)$ of $(Y,y)$, deleting $e_i$ and replacing it by an edge $e'_i$ connecting $x_i$ to $y_i$. An example of this operation is represented in Figure~\ref{fig:treeing}. 

By repeating this operation finitely many times, we obtain a decorated graph $T'$ which is a tree.

\begin{figure}[ht]
\begin{center}
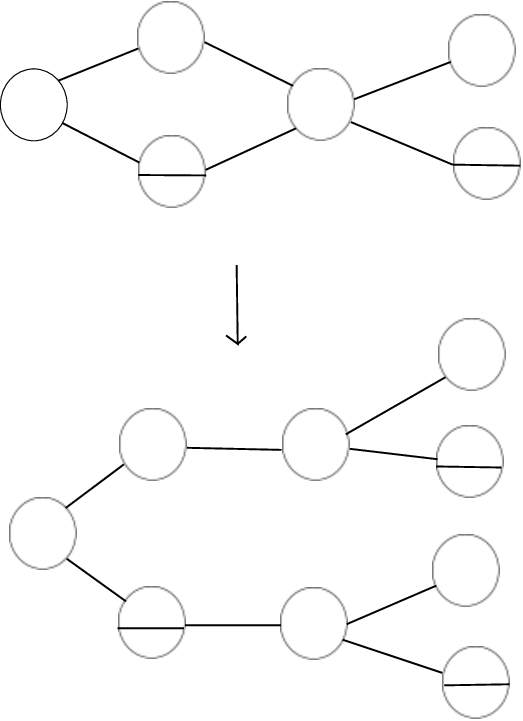
\end{center}
\caption {\label{fig:treeing} Making the graph $G$ into a tree}
\end{figure}

\paragraph{Step 3} If $T'$ is not admissible, pick a vertex $x$ of type $*$ which is not the root. Let $e$ be the edge leading to $x$ (by the previous construction, there must be exactly one). Let $x'$ be the initial vertex of $e$, and let $e_1,\ldots,e_p$ be the edges with initial vertex $x$ (if any.) Modify $T'$ by deleting the vertex $x$ and the edge $e$, and replacing each $e_i$ by an edge $e'_i$ with initial vertex $x'$ and same terminal vertex as $e_i$. See Figure~\ref{fig:admissible} for an example of this operation. (Informally, it corresponds to `collapsing' the edge $e$.) Repeating this finitely many times, we obtain the admissible tree $T$.

\begin{figure}[ht]
\begin{center}
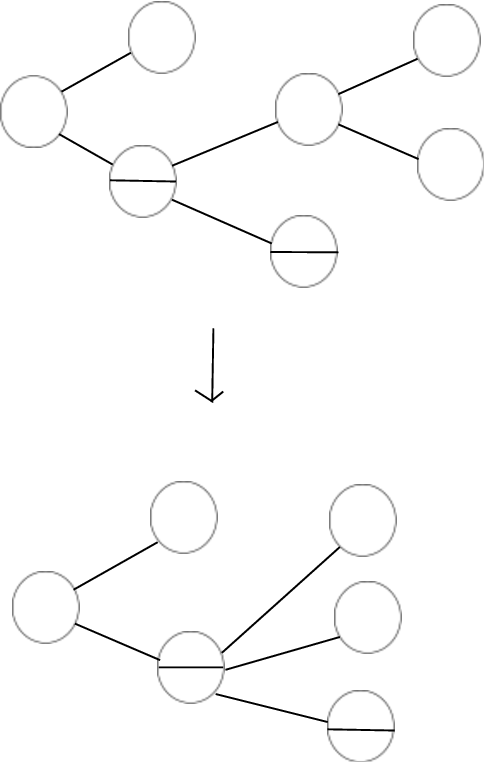
\end{center}
\caption {\label{fig:admissible} Making the tree admissible}
\end{figure}

To sum up, starting with a $2$-automaton $\calx$, we can produce a decorated graph $G=G(\calx)$ such that $L(G)$ is homeomorphic to $E(M(\calx))$, and then an admissible tree $T$ such that $L(T)$ is homeomorphic to $L(G)$, hence also to $E(M(\calx))$.

Given two planar automata $\calx_1,\calx_2$, we can compute the associated admissible trees $T_1,T_2$, and our problem is reduced to checking whether or not $L(T_1)$ and $L(T_2)$ are homeomorphic. A solution of the latter problem is given in the next section. For the general case, we need more structure, see Section~\ref{sec:general}.

\section{Classification of admissible trees}\label{sec:planar}

\subsection{Reduced admissible trees}\label{subsec:reduced}

Let $T_1$, $T_2$ be two admissible trees. We say that $T_1$ and $T_2$ are \bydef{topologically equivalent}, or just \bydef{equivalent}, if the associated spaces $L(T_1)$ and $L(T_2)$ are homeomorphic. We say that they are \bydef{isomorphic} if there is a type-preserving bijection between the set of vertices of $T_1$ and that of $T_2$ which respects the graph structure.

Determining whether two given admissible trees are (topologically) equivalent is the problem we want to solve. The problem of determining whether two given admissible trees are isomorphic is combinatorial in nature and easy to solve algorithmically simply by enumerating all maps from the set of vertices of $T_1$ to that of $T_2$ until a suitable bijection is found (or not.)

We will show how to modify an admissible tree without changing its topological equivalence class, until it belongs to a special class of admissible trees, called \emph{reduced}, for which topological equivalence will turn out to be equivalent to isomorphism. In order to motivate the construction, we first give some simple examples of pairs of admissible trees which are equivalent but fail to be isomorphic.

Let $T_1$ be a tree with two vertices: $*$ and its son $v$ of type $\Theta$. The space $L(T_1)$ is the space of all infinite words on a two-letter alphabet, which is a Cantor set.
Let $T_2$ be a tree with three vertices: $*$, $v_1$ and $v_2$, where both $v_1$ and $v_2$ are of type $\Theta$ and are sons of $*$. Then $L(T_2)$ is a disjoint sum of two Cantor sets, which is homeomorphic to a Cantor set. Thus $T_1,T_2$ are equivalent but nonisomorphic.

Let $T_3$ be a tree with three vertices: $*$, its only son $v$, and the only son $w$ of $v$, with both $v,w$ of type $\oo$. The space $L(T_3)$ can be described as follows: the alphabet is $\{a,b\}$ with the letter $a$ corresponding to $v$ and $b$ to $w$. Then the elements of $L(T_3)$ are the (infinite) word $aaa\cdots$ and all words of the type $a^kbbb\cdots$ for $k\in\nn$. Topologically, this is a space with a single accumulation point and a sequence of isolated points converging to it.

Let $T_4$ be a tree with vertices $*,v,w_1,w_2$, with $v$ the only son of $*$, $w_1,w_2$ two sons of $v$, and every ordinary vertex has type $\oo$. The space $L(T_4)$ consists of an accumulation point and two sequences of isolated points converging to it. This space is homeomorphic to $L(T_3)$ although $T_3$ is not isomorphic to $T_4$.

Finally if $T_5$ is a tree with vertices $*$, its only son $v$, and the only son $w$ of $v$, with both $v,w$ of type $\Theta$, then $L(T_5)$ has no isolated point. Hence it is a Cantor set. More generally, any tree all of whose leaves are of type $\Theta$ have the same property, which produces many nonisomorphic equivalent trees.

We now introduce three moves that can be used to simplify an admissible tree without changing its equivalence class.
Let $T$ be an admissible tree. Recall that when $v$ is a vertex of $T$, we denote by $T(v)$ the subtree consisting of $v$ and its descendants.

\paragraph{Move 1} Let $v$ be a vertex of $T$ of type $\Theta$. Let $v'$ be the father of $v$. Assume that $v'$ is not the root, and that $v$ is the only son of $v'$. Remove the edge between $v$ and $v'$ and replace $v'$ by $v$.

\begin{figure}[ht]
\begin{center}
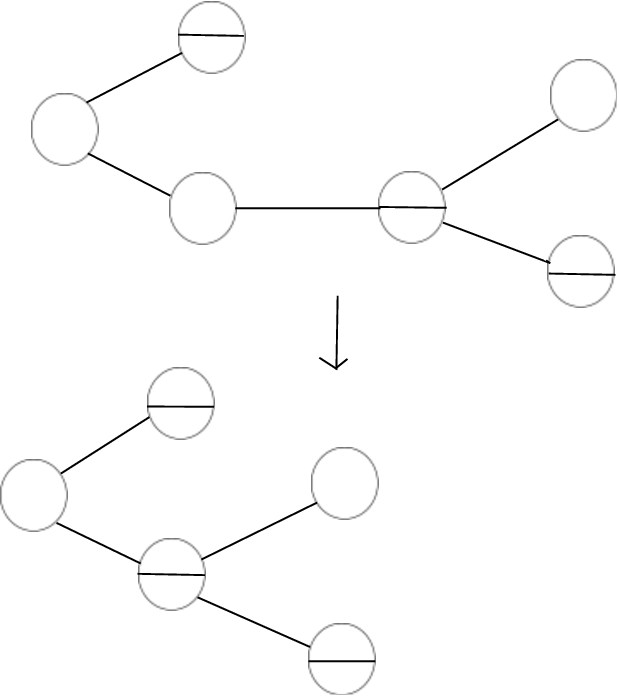
\end{center}
\caption {\label{fig:move1} Move 1}
\end{figure}

\paragraph{Move 2} Let $v_1$ be a vertex of $T$. Let $v_2,v_3$ be two descendants of $v_1$ such that $v_2$ is a son of $v_1$, and $v_3$ is not. Assume that the subtrees $T(v_2)$ and $T(v_3)$ are isomorphic. Remove $T(v_2)$.

\begin{figure}[ht]
\begin{center}
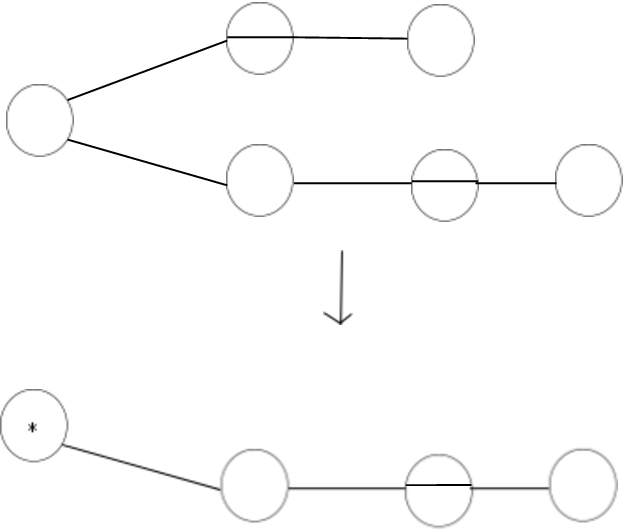
\end{center}
\caption {\label{fig:move2} Move 2}
\end{figure}

\paragraph{Move 3} Let $v_1$ be a vertex of $T$. Let $v_2,v_3$ be two distinct sons of $v_1$. Assume that $v_1$ is not the root, or that both $v_2$ and $v_3$ are of type $\Theta$. Further assume that the subtrees $T(v_2)$ and $T(v_3)$ are isomorphic. Remove $T(v_3)$.

\begin{figure}[ht]
\begin{center}
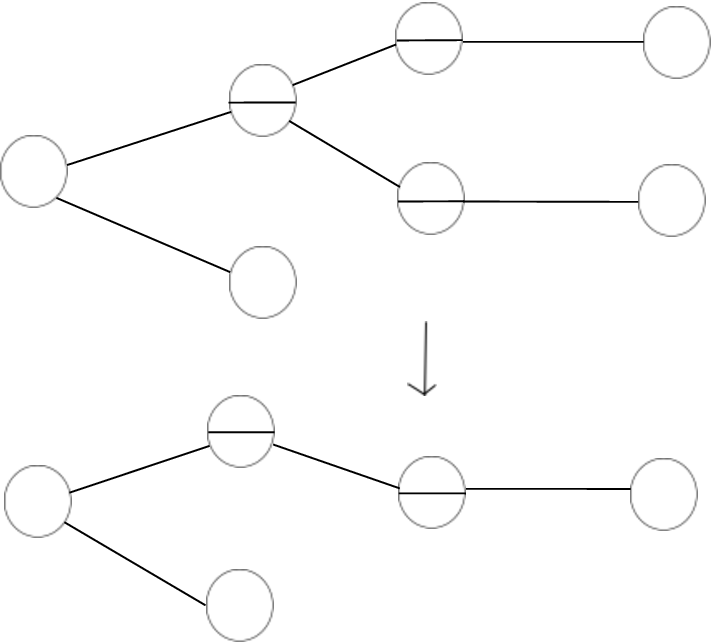
\end{center}
\caption {\label{fig:move3} Move 3}
\end{figure}

Simple examples of the three moves are represented in Figures~\ref{fig:move1}, \ref{fig:move2}, and~\ref{fig:move3} respectively.

\begin{defi}
Let $T$ be an admissible tree. For $i\in\{1,2,3\}$, we say that $T$ has Property~$(\RR_i)$ if Move $i$ cannot be performed. We say that $T$ is \bydef{reduced} if it has property $(\RR_i)$ for all $i$.
\end{defi}

We leave to the reader the tedious but elementary task of checking that Moves~1--3 do not change the homeomorphism type of the associated space $L(T)$. We then have the following proposition:
\begin{prop}\label{prop:reducing}
There is an algorithm which to any admissible tree $T$ associates a reduced admissible tree equivalent to $T$.
\end{prop}

\begin{proof}
Starting with $T$, apply Move 1 as much as possible, then Move 2 as much as possible, then Move 3 as much as possible, then Move 1 again etc. Each time a move is applied, the number of vertices of the tree goes down, hence the process eventually stops.
\end{proof}

The main result of this section is

\begin{theo}\label{thm:reduced}
Two reduced admissible trees $T_1,T_2$ are equivalent if and only if they are isomorphic.
\end{theo}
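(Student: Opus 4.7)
The $(\Leftarrow)$ direction is immediate: any type-preserving isomorphism of decorated trees induces, together with a matching of the alphabets, a homeomorphism of the associated end spaces. For the converse, my plan is to argue by strong induction on the number of vertices of $T_1$. In the base case $T_1$ is the single root, so $L(T_1)=\emptyset$; this forces $L(T_2)=\emptyset$ as well, and since any admissible tree with at least one non-root vertex produces infinite words, $T_2$ is also just a root.

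For the inductive step, I would exploit the natural clopen decomposition $L(T) = \bigsqcup_{v} L^v$, where $v$ ranges over children of the root and $L^v$ is the clopen subset consisting of those words whose path enters the vertex $v$ immediately after the root. Each $L^v$ is canonically homeomorphic to $L(\tilde{T}_v)$, where $\tilde{T}_v$ is obtained from the subtree at $v$ by adjoining a new type-$*$ vertex above $v$; one verifies that $\tilde{T}_v$ is again reduced, since the conditions defining Moves 1--3 behave well under this subtree operation. The heart of the proof is then to show that a homeomorphism $h\colon L(T_1)\to L(T_2)$ induces a bijection between the children of the root of $T_1$ and those of $T_2$ such that the corresponding $\tilde{T}_v$'s have homeomorphic end spaces; the induction hypothesis, applied to each matched pair (after handling separately the borderline case in which the root has only one child, using an argument directly on the type of that child), then finishes the proof.

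The main obstacle is that $h$ need not respect these clopen decompositions. To overcome this, one needs to show that the multiset of homeomorphism types of the pieces $\{L^v\}$ is a topological invariant of $L(T)$ when $T$ is reduced. My expectation is that this follows from a local analysis: for $w \in L(T)$ with terminal vertex $v_l$, the topology of small enough clopen neighborhoods of $w$ is determined, by induction, by the subtree rooted at $v_l$, and reducedness of $T$ ensures that non-isomorphic subtree configurations produce topologically distinguishable local behavior. The three moves act together here: Move 3 forbids duplicate $\Theta$-siblings (and any duplicate siblings at a non-root vertex), Move 1 forbids trivial $\Theta$-elongation, and Move 2 forbids a subtree appearing simultaneously as a son and as a deeper descendant. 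The sole remaining freedom---repetition of type $O$-children of the root with isomorphic subtrees, permitted by the root exception in Move 3---manifests as a multiplicity count of certain topologically identical clopen pieces, and this count is itself invariant under $h$.
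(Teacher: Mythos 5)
Your overall strategy --- decompose $L(T)$ into the clopen pieces $L^v$ indexed by the root's sons, match pieces across the homeomorphism, and recurse --- is in the same spirit as the paper's. Your observations that $\tilde{T}_v$ remains reduced and that repeated type-$O$ sons of the root manifest as a multiplicity count of topologically identical pieces are correct and appear, essentially verbatim, in the paper's proof. But there is a genuine gap exactly where you flag ``the main obstacle.'' The assertion that ``the multiset of homeomorphism types of the pieces $\{L^v\}$ is a topological invariant of $L(T)$ when $T$ is reduced'' is the hard content of the theorem, and you support it only with an expectation (``My expectation is that this follows from a local analysis\dots''). Your strong induction on the number of vertices does not help with this step: the inductive hypothesis relates strictly smaller end spaces to strictly smaller reduced trees, but it says nothing about how an arbitrary homeomorphism $L(T_1)\to L(T_2)$ interacts with the particular clopen decompositions at hand, which is a statement about the big spaces, not the small ones.

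The paper closes this gap with different machinery. It introduces a finite \emph{depth} invariant for points of $L(T)$ --- a Cantor--Bendixson-type stratification refined by an equivalence relation recording the convergence behaviour of lower-depth points --- together with a notion of topological equivalence of points across spaces, and proves Proposition~3.4 by induction on \emph{depth}, not on tree size. That proposition supplies precisely what your heuristic needs and does not justify: (i) in a reduced tree, depth strictly decreases from a vertex to its descendants, which is what makes the local model near a point $w$ determined by the subtree at its terminal vertex (without strict decrease, a point and a nearby descendant point could have the same depth and the ``local analysis'' would not separate them); and (ii) topologically equivalent ordinary vertices in possibly different reduced trees generate isomorphic subtrees, which is the recovery step. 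Both parts make detailed, case-by-case use of the properties $(R_1)$--$(R_3)$ --- e.g.\ $(R_1)$ and $(R_3)$ together kill a degenerate depth-$0$ counterexample, and $(R_2)$ is used to force a candidate matching vertex to be a \emph{son} rather than a deeper descendant. The lemma inside the paper's final proof (that the isomorphism types of the subtrees at the roots' sons coincide) is itself deduced from this machinery. Absent an equivalent of Proposition~3.4, your outline is a plausible plan but not a proof.
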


The `if' part is clear, so we only have to prove the `only if' part. To this end, we develop some theory in Subsection~\ref{sub:dec}, to be applied in Subsection~\ref{sub:theproof}.

\subsection{Decomposing a totally disconnected space}\label{sub:dec}
To provide motivation for the notions introduced in this subsection, we first define inductively the \bydef{combinatorial depth} of an ordinary vertex in an admissible tree.  Let $T$ be an admissible tree. Recall that the \bydef{leaves} of $T$ are  the vertices of $T$ which are ordinary and have degree~$1$ (i.e., have no descendant.) We say that an ordinary vertex of $T$ has \bydef{combinatorial depth}~$0$ if it is a leaf of $T$. Let $T'$ be the admissible tree obtained from $T$ be removing its leaves. Then the leaves of $T'$ are declared to have combinatorial depth $1$, and so on. Since $T$ is finite, every ordinary vertex of $T$ has combinatorial depth $p$ for some $p\in\nn$.

We now define the (topological) \bydef{depth} of a point in a compact, metrizable, totally disconnected space $X$. When $X=L(T)$ for a reduced admissible tree $T$, this will be \emph{a posteriori} closely related to combinatorial depth in $T$, although we will not prove that fact, since we will not need it. The important point is that the definition is topological rather than combinatorial.

First we need some preliminary definitions. For brevity, we simply say that $X$ is a \bydef{space} if it is a compact, metrizable, totally disconnected topological space. A \bydef{pointed space} is a pair $(X,x)$ where $X$ is a space and $x$ a point of $X$.

\begin{defi}
Two pointed spaces $(X,x)$ and $(X',x')$ are \bydef{topologically equivalent} (or just \bydef{equivalent}) if there exists a triple $(U,U',f)$, where $U$ is a neighborhood of $x$, $U'$ is a neighborhood of $x'$, and $f$ is a homeomorphism from $U$ to $U'$ such that $f(x)=x'$.

We sometimes say that $x$ is \bydef{equivalent} to $x'$ if there is no ambiguity concerning the ambient spaces.
\end{defi}

We define inductively on a nonnegative integer $p$ what it means for a point in a space to have \bydef{depth} $p$:

\begin{defi}
Let $(X,x)$ be a pointed space. We say that $x$ has \bydef{depth} $0$ if it is isolated, or has a neighborhood homeomorphic to the Cantor set.

Assume that the phrase ``to have depth $k$'' has been defined for all $k$ less than some number $p$. Let $Y$ be the set of points of $X$ that do not have depth $<p$. Then a point $x\in X$ is said to have \bydef{depth} $p$ if it belongs to $Y$, and either it is isolated in $Y$, or it has a neighborhood $K$ in $Y$ which is a Cantor set, and such that for every $y\in K$, the pairs $(X,x)$ and $(X,y)$ are equivalent.

We say that $x$ has \bydef{finite depth} if it has depth $p$ for some $p\in\nn$. Otherwise it has \bydef{infinite depth}.
\end{defi}

\begin{rem}
\begin{enumerate}
\item[a)] Let $T$ be an admissible tree. We shall see shortly that every point of $L(T)$ has finite depth, which can be bounded from above by the combinatorics of $T$. This is a rather straightforward consequence of the finiteness of $T$. Obtaining lower bounds for the depth is more difficult and this is what much of the proof of Theorem~\ref{thm:reduced} is about.
\item[b)] Let $X$ be a space and $p$ be a nonnegative integer. Then it is easy to see by induction on $p$ that the set of points of $X$ of depth at most $p$ (resp.~at least $p$) is open (resp.~closed).
We shall denote this subset by $X^{\le p}$ (resp.~$X^{\ge p}$.) 
\item[c)] The notion of depth is similar to the Cantor-Bendixson rank; in fact, for countable spaces it is equal to the Cantor-Bendixson rank whenever it is finite (see below.)
\end{enumerate}
\end{rem}

We have the following useful lemma, which tells us that depth depends only on the local topology:
\begin{lem}\label{lem:local}
For every $p\in\nn$ the following holds: let $(X,x)$ and $(X',x')$ be pointed spaces which are equivalent to each other and such that $x$ has depth $p$; then $x'$ also has depth $p$.
\end{lem}

\begin{proof}
The proof is by induction on $p$. For $p=0$ the statement is immediate from the definitions.

Suppose that the statement is true for all $p'<p$. Let $(X,x)$ and $(X',x')$ be equivalent pointed spaces such that $x$ has depth $p$. Let $(U,U',f)$ be a triple as in the definition of topological equivalence. Since $X^{\le p}$ is open, we may, up to replacing $U$ and $U'$ by smaller neighborhoods, assume that $U$ contains only points of depth $\le p$.

If $x'$ had depth $<p$, then applying the induction hypothesis and reversing the roles of $X,X'$, we could deduce that $x$ has depth $<p$. Hence $x'$ has depth at least $p$ (possibly infinite.) Denote by $V$ (resp.~$V'$) the subset of $U$ (resp.~$U'$) consisting of points of depth $<p$. From the induction hypothesis, we know that $f$ induces a bijection from $V$ to $V'$.

By hypothesis, the point $x$ belongs to $X^{\ge p}$, the point $x'$ belongs to $X'^{\ge p}$, and the pairs $(X^{\ge p},x)$ and $(X'^{\ge p},x')$ are equivalent. If $x$ is isolated in $X^{\ge p}$, then $x'$ is isolated in $X'^{\ge p}$, so $x'$ has depth $p$. Otherwise, there is a Cantor set $K\subset X^{\ge p}$ which is a neighborhood of $x$, and all of whose points are equivalent to $x$ in $X$. We may assume that $K\subset U$ and take its image $K'$ under $f$. Thus we have produced a Cantor set $K'$ containing $x'$, and with the property that every $y'\in K'$ is equivalent to $f^{-1}(y')$, hence to $x'$, hence to $x$. This shows that $x'$ has depth $p$.
\end{proof}

\begin{rem}
As a by-product of the proof we see that if $(X,x)$ and $(X',x')$ are equivalent pairs, then we can choose the triple $(U,U',f)$ so that $U$ (resp.~$U'$) contains only points of depth $\le p$, and $f$ is depth-preserving.
\end{rem}

Let us illustrate the notions of depth and topological equivalence on a few examples. First, when $X$ does not contain any Cantor set (equivalently, when $X$ is countable), the points of depth $0$ are exactly the isolated points, the points of depth 1 are the accumulation points which are isolated among accumulation points, and so on. By taking appropriate countable ordinals with the order topology, one can produce for each $p\in\nn$ examples where there exists a point of depth $p$, and all points have depth $\le p$. One can also produce examples which have points of infinite depth:

\begin{example}\label{ex:infinite}
The ordinal $X=\omega^\omega+1$ has points of arbitrarily large finite depths, and exactly one point of infinite depth. By Remark~\ref{rem:infinite} below, there is no admissible tree $T$ such that $L(T)$ is homeomorphic to $X$. This in turn implies that if $f:X\to S^2$ is an embedding, then $S^2\setminus f(X)$ is a surface which is not an automaton surface.
\end{example}

We now turn to an example which does contain Cantor sets.

\begin{example}
Let $T_6$ be an admissible tree with four vertices $*,v_1,v_2,v_3$, where $v_1$ is the only son of $*$ and $v_2,v_3$ are sons of $v_1$, and such that $v_1,v_2$ have type $\oo$ and $v_3$ has type $\Theta$. This is one of the simplest reduced trees. We introduce the alphabet $\{a,b,c,d\}$ with the letter $a$ corresponding to $v_1$, the letter $b$ to $v_2$, and the letters $c,d$ to $v_3$. Then $L(T_6)$ is partitioned into three subsets $\{a^\infty\}$, $U$ and $V$, where $a^\infty$ is the infinite word $aaa\cdots$, the set $U$ contains all words consisting of zero-or-more $a$'s followed by infinitely many $b$'s, and $V$ contains all words consisting of zero-or-more $a$'s followed by an infinite word on the letters $c,d$.

The points of $U$ are exactly the isolated points, which have depth 0. Each point of $V$ is contained in some clopen Cantor set, thus also has depth 0. The point $a^\infty$ has none of the above properties, so it does not have depth 0. Hence $L(T_6)^{\ge 1}=\{a^\infty\}$, and $a^\infty$, being isolated in $L(T_6)^{\ge 1}$, has depth 1 in $L(T_6)$.
\end{example}

\begin{rem}
In this example, the topological equivalence classes are exactly the sets $\{a^\infty\}$, $U$ and $V$. If $X$ is a countable space and $x\in X$ a point of depth $1$, then the pair $(L(T_6,a^\infty))$ is not equivalent to the pair $(X,x)$. Indeed, $a^\infty$ is the limit of a sequence of nonisolated points of depth 0, whereas $x$ is not.
\end{rem}

More generally, if $T$ is an admissible tree and $v$ a leaf of $T$, then the words of $L(T)$ which contain infinitely many letters corresponding to $v$ have depth 0. The converse does not hold in general. For instance the space $L(T_5)$ considered in Subsection~\ref{subsec:reduced}  is a Cantor set, so every point has depth 0. We shall see, however, that this phenomenon cannot happen if $T$ is reduced; this will be an important step in the proof of Theorem~\ref{thm:reduced}.

\subsection{Proof of Theorem~\ref{thm:reduced}}\label{sub:theproof}
Let $T$ be an admissible tree. Let $v$ be an ordinary vertex of $T$. Recall that we denote by $T(v)$ the subtree generated by $v$ (i.e.~consisting of $v$ and its descendants,) and by $L_v$ the set of words in $L(T)$ which have infinitely many letters corresponding to $v$. In the sequel, we shall abusively denote by $L(T(v))$ the language associated to the admissible tree obtained from $T(v)$ by adding a root. Let us denote by $S_v$ the subset of $L(T(v))$ consisting of elements which have at most finitely many letters corresponding to $v$. The following lemma shows that it makes sense to talk about the depth of an ordinary vertex, and that topological equivalence induces an equivalence relation between ordinary vertices of admissible trees.
\begin{lem}\label{lem:ind}
Let $T$ be an admissible tree. For every ordinary vertex $v$, any two elements of $L_v$ are topologically equivalent. In particular, they have have the same depth.
\end{lem}

\begin{proof}
Let $v$ be an ordinary vertex of $T$. Let $w,w'$ be words in $L_v$. By definition, $w$ is equal to $w_1 w_2$ where $w_1$ is a (possibly empty) finite word, and $w_2$ is an infinite word on the letter(s) associated to $v$. Similarly, $w'$ has a decomposition $w'_1 w'_2$.

\paragraph{Case 1} The vertex $v$ has type $\oo$.

Let $b_k$ be the letter associated to $v$. We have $w_2=w'_2=b_k^{\infty}$. If $v$ is a leaf, then both $w$ and $w'$ are isolated, and thus equivalent. Otherwise, let $U$ be the union of $\{w\}$ with the set of words of the form $w_1w_5$ with $w_5\in S_v$. Likewise, let $U'$ be the union of $\{w'\}$ with the set of words of the form $w'_1w_5$ with $w_5\in S_v$. It follows from the definition of the topology on $L(T)$ that $U$ (resp.~$U'$) is an open neighborhood of $w$ (resp.~$w'$). Moreover, the map $f:U\to U'$ defined by setting $f(w):=w'$, and for every $w_5\in S_w$, $f(w_1w_5):=w'_1w_5$ is a homeomorphism. Hence $w$ is equivalent to $w'$.

\paragraph{Case  2} The vertex $v$ has type $\Theta$.

Let $b_k,b_l$ be the letters associated to $v$. Let $T_{kl}$ be the rooted infinite dyadic tree with edges labeled by $b_k,b_l$. The set $F$ of finite words on $\{b_k,b_l\}$ (i.e.~the free monoid on $\{b_k,b_l\}$) can be identified with the set of finite injective paths in $T_{kl}$ starting from the root. Likewise, the set $K$ of infinite words on the same letters can be identified with the set of infinite injective paths in $T_{kl}$ starting from the root. There exists an automorphism $\phi$ of $T_{kl}$ (as a rooted tree) which takes every finite prefix of $w_2$ to the prefix of $w'_2$ of same length~(cf.~\cite[Section 3]{grigorchuk:just}.) This automorphism induces a self-homeomorphism (still denoted by $\phi$) of the Cantor set $K$ which takes $w_2$ to $w'_2$.

We now argue as in Case 1. Let $U$ (resp.~$U'$) be the set of words of the form $w_1w_5$ (resp.~$w'_1w_5$) with $w_5\in L(T(v))$. Then $U$ is a neighborhood of $w$, $U'$ is a neighborhood of $w'$, and we can define a homeomorphism $f:U\to U'$ in the following way: if $w_5\in K$, then $f(w_1w_5):=w'_1\phi(w_5)$ (in particular, $f(w)=w'$ as required); otherwise $w_5$ is of the form $w_3w_4$ with $w_3\in F$ and $w_4$ is some infinite word on the letters associated to descendants of $v$, and we set $f(w_1w_3w_4):=w'_1\phi(w_3)w_4$.
\end{proof}

\begin{rem}
A variation on this argument proves that if $T_1,T_2$ are admissible trees, $v_1$ is an ordinary vertex of $T_1$, and $v_2$ is an ordinary vertex of $T_2$ such that $T_1(v_1)$ is isomorphic to $T_2(v_2)$, then $v_1$ is equivalent to $v_2$. The converse is not true in general; however, we shall see that it is true if both $T_1$ and $T_2$ are reduced. This will be a key step in the proof.
\end{rem}

\begin{rem}\label{rem:infinite}
Let $T$ be an admissible tree. If $x\in L_v$ for some leaf $v$ of $T$, then $x$ is isolated or belongs to some Cantor clopen subset of $L$. Thus $v$ has depth $0$. If $v$ is an ordinary vertex of $T$ which is not a leaf, and all descendants of $v$ have finite depth, then $v$ has finite depth, equal to $M$ or $M+1$, where $M$ is the maximum of the depths of the descendants of $v$. This implies that every ordinary vertex has finite depth, and depth is nonincreasing along paths in the tree starting from the root and going to the leafs.
\end{rem}

\begin{lem}\label{lem:zero}
Let $T$ be a reduced tree. Then the vertices of depth $0$ of $T$ are exactly its leaves.
\end{lem}

\begin{proof}
We have already remarked that the leaves have depth 0. Let us prove the converse by contradiction. Let $T$ be a reduced tree, and $v$ be a vertex of depth 0 which is not a leaf. Assume that $v$ has minimal combinatorial depth among such vertices, i.e.~is as close to the leaves as possible. All the descendants of $v$ have depth 0, so by minimality, $v$ has combinatorial depth $1$.

If $v$ has exactly one son $v'$, then $v'$ must be of type $\Theta$; otherwise every point of $L_v$ would be the limit of some sequence of isolated points, hence of nonzero depth. This contradicts Property~$(\RR_1)$.

Thus $v$ has at least two sons. By Property~$(\RR_3)$, it has at most (hence exactly) two sons $v'$, $v''$, and these have different types. Now if, say, $v'$ has type $\oo$, then again every point of $L_v$ would be the limit of sequence of isolated points, giving a contradiction.
\end{proof}

\begin{prop}\label{prop:key}
For every integer $p\ge 0$, the following assertions hold:
\begin{enumerate}
\item[$(\AA_p)$] Let $T$ be a reduced tree and $v$ be a vertex of $T$ of depth $p$. Then all descendants of $v$ have depth $<p$.
\item[$(\BB_p)$] Let $T_1,T_2$ be reduced trees. For $i=1,2$ let $v_i$ be an ordinary vertex of $T_i$. Assume that for $i=1,2$, $v_i$ has depth $p$, and all its descendants have depth $<p$. Further assume that $v_1$ is topologically equivalent to $v_2$.  Then $T_1(v_1)$ is isomorphic to $T_2(v_2)$.
\end{enumerate}
\end{prop}

\begin{proof}
The proof is by double induction on $p$. First observe that $(\AA_0)$ and $(\BB_0)$ are direct consequences of Lemma~\ref{lem:zero}. Our induction scheme is the following: prove $(\BB_1)$, then $(\AA_1)$, then $(\BB_2)$, then $(\AA_2)$ etc.

This reduces to two claims:

\paragraph{Claim 1} Assume $p\ge 1$. If $(\AA_k)$ and $(\BB_k)$ are true for all $k\le p$, then $(\BB_{p+1})$ is true.

Let $T_1,T_2,v_1,v_2$ be as in the statement of $(\BB_{p+1})$. From the induction hypothesis we know that $v_1,v_2$ have depth $p+1$ and their sons have depth at most $p$. Furthermore, the grandsons of $v_1,v_2$ have depth at most $p-1$.

Let $v'_1$ be a son of $v_1$ of depth $k$. For every point $x$ of $L_{v_1}$, there exists a sequence of points of $L_{v'_1}$ which converges to $x$. Since $v_1$ is equivalent to $v_2$, it follows that $v_2$ has a descendant $v'_2$ which is equivalent to $v'_1$. We claim that $v'_2$ is a son of $v_2$. Suppose not. Then there is a unique vertex $v''_2$ of $T_2$ which is a son of $v_2$ and an ancestor of $v'_2$, and the depth of $v'_2$ is strictly between the depths of $v_2$ and $v'_2$. Arguing as above, we see that $v''_2$ is equivalent to some descendant $v''_1$ of $v_1$. By the induction hypothesis, the subtrees $T_1(v'_1)$ and $T_2(v'_2)$ are isomorphic to each other, and so are $T_1(v''_1)$ and $T_2(v''_2)$. This contradicts Property~$(\RR_2)$.

To sum up, we have shown that for every son $v'_1$ of $v_1$, there is a son $v'_2$ of $v_2$ such that $v'_1$ is equivalent to $v'_2$. By the induction hypothesis, the subtrees $T_1(v'_1)$ and $T_2(v'_2)$ are isomorphic. By Property~$(\RR_3)$, for $i=1,2$ we have the following property: if $v',v''$ are two distinct sons of $v'_i$, then $T_i(v')$ is not isomorphic to $T_i(v'')$. Hence we have produced a bijection between the set of sons of $v_1$ and the set of sons of $v_2$ which preserves the isomorphism type of the generated trees. This implies that $T_1(v_1)$ is isomorphic to $T_2(v_2)$. We thus have proved Claim~1.

\paragraph{Claim 2} Assume $p\ge 1$. If $(\AA_k)$ is true for every $k<p$, and $(\BB_k)$ is true for every $k\le p$, then $(\AA_p)$ is true.

Arguing by contradiction as in the proof of Lemma~\ref{lem:zero} and taking a counterexample $(T,v)$ with $v$ of minimal combinatorial depth, we may assume the following: $v$ has depth $p$; it has at least one son $v_1$ of depth $p$, possibly other sons $v_2,\ldots,v_r$ of depth $p$, and possibly other sons of depth $<p$. Furthermore, all descendants of the $v_k$'s have depth $<p$.

There exists a sequence of points of $L_{v_1}$ converging to some point $x$ of $L_{v}$. This shows that $x$ is not isolated in $L(T)^{\ge p}$. Thus there exists a Cantor neighborhood $K\subset L(T)^{\ge p}$ all of whose points are equivalent in $L(T)$. This implies that $v_1$ is equivalent to $v$. By transitivity, all the $v_k$'s are equivalent to one another. By Property~$(\BB_p)$, their associated subtrees are pairwise isomorphic. This shows that $r=1$ (otherwise we would have a contradiction with Property~$(\RR_3)$.)

We now show that $v$ does not have a son of depth $<p$. Suppose that $v'$ is such a son. We are assuming $(\AA_k)$ for all $k<p$, so every descendant of $v'$ has depth strictly less than the depth of $v'$. Since $v$ is equivalent to $v_1$, there must exist a son $v_2$ of $v_1$ such that $v_2$ is equivalent to $v'$. Applying $(\BB_k)$ with $k$ the depth of $v'$, we deduce that $T(v')$ is isomorphic to $T(v_2)$. This contradicts Property~$(\RR_2)$.

Hence $v_1$ is the only son of $v$. By~$(\RR_1)$, it must be of type $\oo$. Hence every point of $L_{v_1}$ is isolated in $L(T)^{\ge p}$, but no point of $L_v$ is. This contradicts equivalence between $v$ and $v_1$. Thus we have proved Claim~2.
\end{proof}

We now turn to the proof of our main technical theorem. The proof is a variant of the argument used in the proof of Claim~1  above, with the difference that the root must be treated in a different way since Move~3 is not fully available for it.

\begin{proof}[Proof of Theorem~\ref{thm:reduced}]
Let $T_1,T_2$ be reduced admissible trees which are topologically equivalent. Since an admissible tree is reduced to the root iff its associated space is empty, we can assume that this is not the case.

Let $v_1$ be a son of the root of $T_1$. By topological equivalence, there exists an ordinary vertex $v_2$ of $T_2$ which is equivalent to $v_1$. By Proposition~\ref{prop:key}, the subtrees $T_1(v_1)$ and $T_2(v_2)$ are isomorphic.

We claim that $v_2$ is a son of the root of $T_2$. Suppose it is not. Let $v'_2$ be the only ancestor of $v_2$ which is a son of the root. By topological equivalence, there exists a vertex $v'_1$ of $T_1$ which is equivalent to $v'_2$. By Proposition~\ref{prop:key} again, $T_1(v'_1)$ is isomorphic to $T_2(v'_2)$. Hence $T_1(v'_1)$ has a proper subtree isomorphic to $T_1(v_1)$. This contradicts Property~$(\RR_2)$.

We have shown that the isomorphism types of the trees generated by the sons of the roots of $T_1,T_2$ are the same. By rule $(\RR_3)$, each type of subtree with  root of type $\Theta$ can occur at most once. This need not be true for types of subtrees with ancestor of type $\oo$. However, each son of the root of type $\oo$ contributes a single point, so the number of such points is invariant under homeomorphism. This concludes the proof of Theorem~\ref{thm:reduced}.
\end{proof}

\section{The general case}\label{sec:general}

In this section, we indicate how to adapt the proof from the planar case to the general case.

We begin with some simple observations. Let $\calx$ be a topological 2-automaton. Then every handle (resp.~cross-cap) in $M(\calx)$ must come from a handle (resp.~cross-cap) in some building block. Conversely, any handle (resp.~cross-cap) in some building block $X_k$ will give either infinitely many handles (resp.~cross-caps) in $M(\calx)$ or just one, according to whether there is a loop on $X_k$ or higher in the hierarchy, or there is no such loop.

It follows that we can determine the genus (finite or infinite) and orientability class of $M(\calx)$ directly on the automaton. In the sequel, we focus on the triple $(E,E',E'')$, using the machinery of decorated graphs. For brevity, we say that $(E,E',E'')$ is a \bydef{triple of spaces} if $E$ is a compact, metrizable, totally disconnected space, $E'$ is a closed subset of $E$, and $E''$ is a closed subset of $E'$. Two such objects $(E,E',E'')$ and $(F,F',F'')$ are \bydef{equivalent} if there is a homeomorphism $f:E\to E'$ such that $f(E')=F'$ and $f(E'')=F''$.

\subsection{Decorated graphs and admissible trees}
In order to suitably generalize our notion of decorated graph, we work with the extended set of symbols $S:=\{*,*^h,*^c,\oo,\oo^h,\oo^c,\Theta,\Theta^h,\Theta^c\}$. The `meaning' of these symbols will be explained next. For the moment, let us just say that the superscript $h$ stands for `handle', and `c' is for cross-cap.

A \bydef{decorated graph} is a pair $(G,f)$, where $G$ is a finite oriented graph without oriented cycles, endowed with a base vertex, called the \bydef{root}, and $f$ is a map from the set of all vertices of $G$ to the set $S$, in such a way that the root has image $*$, $*^h$, or $*^c$.

We would like to associate to a decorated graph $G$ a triple of spaces $(L(G),L'(G),L''(G))$. However, there is a difficulty, which we now explain by means of an example.

Let $G$ be a decorated tree with three vertices $v_1,v_2,v_3$ with $v_1$ the root, $v_2$ a son of $v_1$, and $v_3$ a son of $v_2$, and such that $v_1$ (resp.~$v_2$, resp.~$v_3$) has type $*$ (resp.~$\oo$, resp.~$\oo^h$.)

In order to define $L(G)$, we ignore the superscripts, so in this case we assign the letter $a$ to $v_2$ and the letter $b$ to $v_3$, and get a space with a single accumulation point $a^\infty$ and a sequence of isolated points of the form $a^k b^\infty$. Since only $v_2$ has a superscript $h$, it would be tempting to say that $L'(G)$ consists of all points of $L(G)$ except $a^\infty$. However, this does not work, since $L'(G)$ must be closed. The reader is invited to find a corresponding automaton (the connection between automata and decorated graphs will be explained in the next subsection), draw a picture of its associated surface, and check that every end of this surface is indeed nonplanar.

In order to overcome this issue, we make the following definition:

\newpage

\begin{defi}
A decorated graph $G$ is \bydef{coherent} if it satisfies the following properties:
\begin{enumerate}
\item No vertex of type $\oo$ or $\Theta$ has a descendant of type $*^h$, $*^c$, $\oo^h$, $\oo^c$, $\Theta^h$, or $\Theta^c$.
\item No vertex of type $\oo^h$ or $\Theta^h$ has a descendant of type $*^c$, $\oo^c$, or $\Theta^c$.
\end{enumerate}
\end{defi}

Let $G$ be a coherent decorated graph. We define a triple of spaces  $(L(G),L'(G),L''(G))$ as follows: set $r := n_\oo + 2 n_\Theta$, where $n_\oo$ (resp.~$n_\Theta$) is the number of vertices of $G$ of type belonging to the set $\{\oo,\oo^h,\oo^c\}$ (resp.~of type belonging to $\{\Theta,\Theta^h,\Theta^c\}$). Consider the alphabet $B=\{b_1,\ldots, b_r\}$ and assign to each vertex $v$ of $G$ zero, one or two letters according to its type. Then $L(G)$ is defined as a subspace of $B^\nn$ in the same way as in the planar case, ignoring the superscripts. 
The subset $L'(G)$ is defined as the union of the $L_v$'s over all vertices $v$ of type $\oo^h$, $\oo^c$, $\Theta^h$ or $\Theta^c$, and $L''(G)$ is the union of the $L_v$'s over all vertices of type $\oo^c$ or $\Theta^c$.

\begin{defi}
An \bydef{admissible tree} is a decorated graph which has the following properties:
\begin{enumerate}
\item It is a tree.
\item It is coherent.
\item All edges point away from the root.
\item The root has type $*$, and is the only vertex with this property.
\item There are no vertices of type $*^h$ or $*^c$.
\end{enumerate}
\end{defi}

\subsection{From automata to admissible trees}
Let $\calx$ be a topological 2-automaton. First we form the decorated graph $(G,f)$ from the directed graph underlying the automaton by assigning to each building block $X_k$ a symbol in $S$, in the following way: if there is no loop around $X_k$, then we put $*$ if $X_k$ is planar, $*^h$ if $X_k$ is orientable, but not planar, and $*^c$ if $X_k$ is nonorientable.
If there is one loop around $X_k$, then we put $\oo$ if $X_k$ is planar, $\oo^h$ if it is orientable, but not planar, and $\oo^c$ if it is nonorientable. If there are two or more loops, we put $\Theta$ if $X_k$ is planar, $\Theta^h$ if it is orientable but not planar, and $\Theta^c$ if it is nonorientable.

Then we modify $G$ in order to make it coherent. For brevity we say that a vertex \bydef{has type $\mathrm{T}^h$} if its type belongs to the set $\{*^h,\oo^h,\Theta^h\}$. Likewise,we say that a vertex has type $\mathrm{T}^c$ if its type belongs to the set $\{*^c,\oo^c,\Theta^c\}$. 

The procedure for making $G$ coherent has two steps.

\paragraph{Step 1} For every vertex $v$ of type $\oo$ (resp.~$\Theta$) that has at least one descendant of type $\mathrm{T}^h$, but no descendant of type $\mathrm{T}^c$, we change the type of $v$ to $\oo^c$ (resp.~$\Theta^c$).

\paragraph{Step 2} For every vertex $v$ of type $\oo$ (resp.~$\Theta$, resp.~$\oo^h$, resp.~$\Theta^h$) that has at least one descendant of type $\mathrm{T}^c$, we change the type of $v$ to $\oo^c$ (resp.~$\Theta^c$, resp.~$\oo^c$, resp.~$\Theta^c$).

The result is a coherent graph $G$ such that $(L(G),L'(G),L''(G))$ is equivalent to $(E(M(\calx)),E'(M(\calx)),E''(M(\calx))$.

Next we modify the decorated graph in order to make it a tree, by removing the part which is not accessible from the root, and duplicating some of its parts. This is done exactly as in the planar case, and preserves coherence.

Finally, in order to make the tree admissible, change all $*^h$ and $*^c$ to $*$ if necessary, and proceed as in the planar case.

To sum up, we have obtained an admissible tree $T$ such that  the triple $(L(T),L'(T),L''(T))$ is equivalent to $(E(M(\calx)),E'(M(\calx)),E''(M(\calx)))$.

Suppose now that we have two topological 2-automata $\calx_1,\calx_2$ and wish to know whether their associated surfaces are homeomorphic. We first check whether or not they have same genus and orientability class, as explained above. If they do, then we still have to check the equivalence of the triples $(L(T_i),L'(T_i),L''(T_i))$.

\subsection{Reduced trees and their classification}
We say that two admissible trees $T_1$ and $T_2$ are \bydef{equivalent} if the triple $(L(T_1),L'(T_1),L''(T_1))$ is equivalent to $(L(T_2),L'(T_2),L''(T_2))$. We also have the obvious extension of notion of (combinatorial) isomorphism between two admissible trees. Again, isomorphism implies equivalence; the converse does not hold, so we need to define reduced trees.

This is done using three moves which extend the moves used in the planar case. Moves 2 and 3 are defined in the same way as in the planar case, the word `isomorphic' being interpreted in the appropriate generalized way. Move~1 needs the following adjustment:

\paragraph{Move 1'} Let $v$ be a vertex of $T$ of type $\Theta$ (resp.~$\Theta^h$, resp.~$\Theta^c$). Let $v'$ be the father of $v$. Assume that $v'$ is not the root, and that $v$ is the only son of $v'$. Further assume that $v'$ has type $\oo$ or $\Theta$ (resp.~$\oo^h$ or $\Theta^h$, resp.~$\oo^c$ or $\Theta^c$.)
Remove the edge between $v$ and $v'$ and replace $v'$ by $v$.

Let $T$ be an admissible tree. If none of Moves 1', 2, 3 can be performed, we say that $T$ is \bydef{reduced}. Then we have the following extension of Proposition~\ref{prop:reducing}, which is proved in the same way:

\begin{prop}\label{prop:reducing general}
There is an algorithm which to any admissible tree $T$ associates a reduced admissible tree equivalent to $T'$.
\end{prop}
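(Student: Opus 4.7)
My plan is to imitate the proof of Proposition~\ref{prop:reducing} verbatim, only verifying that the three ingredients used there still work in the richer setting: (a) each of Moves 1', 2, 3 preserves the equivalence class of the tree, (b) each move strictly decreases the number of vertices, and (c) the iterative procedure \emph{apply Move 1' until impossible, then Move 2 until impossible, then Move 3 until impossible, then Move 1' again, etc.} terminates.

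Termination is immediate and essentially the same as in the planar case: Move 1' contracts an edge and identifies two vertices into one, while Moves 2 and 3 delete a whole subtree, so each application strictly decreases the vertex count. Hence after finitely many rounds no move applies and the result is reduced. The remaining task is the invariance of the equivalence class, i.e.\ of the genus, the orientability class, and the triple $(L(T),L'(T),L''(T))$ up to homeomorphism of triples. For Moves 2 and 3 this reduces to the planar verification (left to the reader there): the two sibling or descendant subtrees being isomorphic, they contribute identical pieces to each of $L$, $L'$, $L''$, and deleting one copy does not alter the homeomorphism type of the triple; the side conditions of Move 3 (non-root parent, or both sons of type $\Theta$-family) are precisely what guarantees that the ends contributed by the deleted subtree are already present elsewhere. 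Neither move affects the genus or the orientability class, which are read off the root.

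The main obstacle is the invariance under Move 1'. One must check that when $v$ is of type $\Theta$ (resp.\ $\Theta^h$, $\Theta^c$) and $v'$ is of type $O$ or $\Theta$ (resp.\ $O^h$ or $\Theta^h$, resp.\ $O^c$ or $\Theta^c$), then merging $v'$ into $v$ leaves $(L,L',L'')$ unchanged up to homeomorphism. The point is that the single letter contributed by the father $v'$ is absorbed into the Cantor cluster generated by $v$ (since $v$ already provides at least two self-letters, a prefix using the $v'$-letter produces a point that is a condensation point of $v$-words, indistinguishable from other such points), and the matching type constraint ensures that the two vertices lie in exactly the same one of the three subspaces: they are simultaneously planar/nonplanar and simultaneously orientable/nonorientable. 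This is exactly why Move 1 had to be strengthened to Move 1': without the type restriction, one could merge a $v'$ of type $O^c$ into a $v$ of type $\Theta$ and destroy points of $L''$, changing the equivalence class.

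Once these invariances are in place, the algorithm is: run the loop described above; by termination one obtains a reduced admissible tree, and by the invariance statements it is equivalent to $T$. This proves Proposition~\ref{prop:reducing general}.
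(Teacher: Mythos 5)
Your proposal is correct and follows the same route as the paper: the paper's proof of Proposition~\ref{prop:reducing general} is simply the observation that the argument of Proposition~\ref{prop:reducing} carries over verbatim (iterate the moves; each move strictly decreases the vertex count; hence termination), with the invariance of the equivalence class under the moves left to the reader. Your added discussion of why Move~1' needs the type-matching restriction is a useful elaboration of the point the paper leaves implicit, but it does not change the underlying argument.
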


Theorem~\ref{thm:reduced} extends to the following result:
\begin{theo}\label{thm:reduced general}
Two reduced admissible trees $T_1,T_2$ are equivalent if and only if they are isomorphic.
\end{theo}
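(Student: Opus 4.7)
The plan is to mimic the proof of Theorem~\ref{thm:reduced}, enriching the notion of topological equivalence so as to account for the extra data carried by the triple $(L(T),L'(T),L''(T))$. Concretely, I would rerun the inductive construction of the subsets $L_p$ and of the ``same topological type'' equivalence relation, but declaring two points equivalent only if they agree on membership in $L'(T)$ and in $L''(T)$. Under this refinement, for any ordinary vertex $v$ the set $L_v$ still consists of points of common depth and common topological type, so one can again speak of \emph{the} depth and \emph{the} equivalence class of a vertex. The extra decorations $O^h,O^c,\Theta^h,\Theta^c$ versus $O,\Theta$ are thereby encoded intrinsically: the superscript $h$ records that points of $L_v$ lie in $L'(T)\setminus L''(T)$, and the superscript $c$ that they lie in $L''(T)$.

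Next I would prove the exact analogue of Proposition~\ref{prop:key}: in a reduced tree every descendant of an ordinary vertex of depth $p$ has depth strictly less than $p$; and if two ordinary vertices $v_1,v_2$ of reduced trees $T_1,T_2$ are topologically equivalent and have depth $p$, then $T_1(v_1)\cong T_2(v_2)$. The induction on $p$ proceeds exactly as in the planar case. For the base case $p=0$, properties $(R_1')$ and $(R_3)$ together show that depth-$0$ ordinary vertices of a reduced tree are precisely the leaves. For the inductive step, the minimal counterexample $(T,v)$ produces a son $v'$ of $v$ with the same depth; analysing the Cantor set witnessing the equivalence forces $v'$ to be of type $\Theta$, $\Theta^h$ or $\Theta^c$; then $(R_1')$ supplies a sibling $v''$, Assertion~(ii) of the induction hypothesis forces $v''$ to have the same depth (otherwise $(R_2)$ is violated), and a son-matching argument identical to the planar one yields $T(v')\cong T(v'')$, contradicting $(R_3)$. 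The proof of the hereditary step for Assertion~(ii) follows the same template.

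The main obstacle, and the precise reason Move~1 had to be weakened to Move~1$'$, is to make the symbolic type of a vertex a genuine invariant of the local topology of the triple. One must check that if $v'$ is the parent of a sole $\Theta$-type son $v$, then collapsing $v'$ into $v$ is safe only when the orientability/planarity labels of $v'$ and $v$ match; otherwise the ends produced near the collapsed point would move between $L'(T)\setminus L''(T)$ and $L''(T)$, which is detectable from $(L(T),L'(T),L''(T))$. Once this bookkeeping is verified (the other moves are unchanged and their preservation of the triple is straightforward), the statement ``topologically equivalent vertices carry the same symbol in $S$'' becomes automatic, and the inductive skeleton of Proposition~\ref{prop:key} transports without further modification.

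Finally, I would conclude as in the planar case. The equivalence of $T_1$ and $T_2$ forces the genus and orientability class stored at the root labels to coincide, so the root decorations agree. The extended Proposition, applied to the sons of the roots, provides a type-preserving bijection that matches subtrees up to isomorphism; as before, $(R_3)$ controls subtrees whose top vertex is of $\Theta$-type, while subtrees whose top vertex is of $O$-type contribute individually detectable isolated points in the appropriate stratum of $L$, $L'$, or $L''$, so their cardinality in each topological type is itself a homeomorphism invariant of the triple. Assembling these matchings at the root yields the desired isomorphism $T_1\cong T_2$.
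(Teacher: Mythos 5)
Your proposal is correct and follows essentially the same route as the paper: extend depth and topological equivalence to the triple $(L(T),L'(T),L''(T))$, prove the analogue of Proposition~\ref{prop:key} for this refined notion, and then conclude as in the planar case. The paper in fact gives only the new definitions and the statement of the key proposition, declaring the proof ``almost identical'' to that of Proposition~\ref{prop:key}; you supply additional useful detail, in particular the observation that the triple-valued depth forces topologically equivalent vertices to carry the same superscript and that this is precisely why Move~1 must be strengthened to Move~1$'$.

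One minor imprecision: you describe the failure mode of unrestricted Move~1 as moving ends between $L'(T)\setminus L''(T)$ and $L''(T)$; in fact the problematic mergers can also confuse $L(T)\setminus L'(T)$ with $L'(T)\setminus L''(T)$, i.e.\ planar with non-planar orientable. Move~1$'$ therefore requires the superscripts (none, $h$, or $c$) of father and son to agree exactly, not just the orientability bit. This does not affect the structure of your argument.
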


The proof of Theorem~\ref{thm:reduced general} is a straightforward extension of  that of Theorem~\ref{thm:reduced}. We need to define topological equivalence and depth of points.

Let $(X,X',X'')$ be a triple of spaces. We think of points of $X$ as colored by one of three colors according to the partition $X= X'' \sqcup (X'\setminus X'') \sqcup (X\setminus X')$. Thus, if $(X,X',X'')$ and $(Y,Y',Y'')$  are triples of spaces, a homeomorphism $f:X\to Y$ induces an equivalence of triples iff it is color-preserving. If $U$ is an open subset of $X$, then the restriction $f:U\to f(U)$ is still color-preserving. We say that a subset $A\subset X$ is \bydef{monochromatic} if all points of $A$ belong to the same of these three subsets. 

\begin{defi}
Let $(X,X',X'')$ and $(Y,Y',Y'')$ be triples of spaces. Let $x\in X$ and $y\in Y$. We say that $x$ is \bydef{equivalent} to $y$ if there exists a triple $(U,V,f)$ where $U$ is a neighborhood of $x$, $V$ a neighborhood of $y$, and $f:U\to V$ is a color-preserving homeomorphism which takes $x$ to $y$.
\end{defi}

A point $x\in X$ has \bydef{depth} 0 if either $x$ is isolated, or $x$ has an open, monochromatic neighborhood homeomorphic to the Cantor set. Assuming we know what `to have depth $k$' means for all $k$ less than some number $p$, we say that $x$ has \bydef{depth} $p$ if it belongs to the complement $A$ of the set of points of depth $<p$, and either it is isolated in $A$, or it has an open neighborhood $K$ in $A$ which is a monochromatic Cantor set, and all of whose points are equivalent to $x$ in $X$.

In order to motivate the previous definition, consider the following example:

\begin{example}
Let $T_7$ be an admissible tree with three vertices $*_0,v,w$ where $v$ is the only son of $*_0$ and has type $\Theta^c$, and $w$ is the only son of $v$ and has type $\Theta$. In order to describe $L(T_7)$, we introduce two letters $a,b$ corresponding to $v$ and two letters $c,d$ corresponding to $w$. Then $L(T_7)=U \cup V$ where $U$ is the set of words containing only $a$'s and $b$'s, and $V$ its complement.

Since $V$ is a countable union of monochromatic Cantor sets, all of its points have depth 0. However, any point $x$ of $U$ has depth 1. Indeed, although $x$ certainly has open neighborhoods homeomorphic to the Cantor set (e.g.~$L(T_7)$ itself), it does not have any monochromatic one, since $x$ belongs to $L'(T_7)$, and there are sequences of points not in $L'(T_7)$ converging to $x$.
\end{example}

As in the planar case, one proves that two equivalent points have the same depth (cf.~Lemma~\ref{lem:local}), that it makes sense to talk about the topological type of a vertex of an admissible tree (cf.~Lemma~\ref{lem:ind}), and that when the tree is reduced, the points of depth ~0 are exactly the leaves (cf.~Lemma~\ref{lem:zero}.)
The key proposition is the following extension of Proposition~\ref{prop:key}:

\begin{prop}\label{prop:keygen}
For every integer $p\ge 0$, the following assertions hold:
\begin{enumerate}
\item[$(\AA_p)$] Let $T$ be a reduced tree and $v$ be a vertex of $T$ of depth $p$. Then all descendants of $v$ have depth $<p$.
\item[$(\BB_p)$] Let $T_1,T_2$ be reduced trees. For $i=1,2$ let $v_i$ be an ordinary vertex of $T_i$. Assume that for $i=1,2$, $v_i$ has depth $p$, and all its descendants have depth $<p$. Further assume that $v_1$ is topologically equivalent to $v_2$.  Then $T_1(v_1)$ is isomorphic to $T_2(v_2)$.
\end{enumerate}
\end{prop}

\begin{proof}
The only significant difference with the planar case being in the replacement of Move~1 by Move~1', we discuss the part of the proof concerned with that Move in detail, and give only a brief sketch for the rest.

As in the planar case, $(\AA_0)$ and $(\BB_0)$ are immediate from the characterization of points depth~0, and the induction is based on two claims:

\paragraph{Claim 1} Assume $p\ge 1$. If $(\AA_k)$ and $(\BB_k)$ are true for all $k\le p$, then $(\BB_{p+1})$ is true.

Let $T_1,T_2,v_1,v_2$ be as in the statement of $(\BB_{p+1})$. From the induction hypothesis we know that $v_1,v_2$ have depth $p+1$ and their sons have depth at most $p$. Furthermore, the grandsons of $v_1,v_2$ have depth at most $p-1$.

Let $v'_1$ be a son of $v_1$ of depth $k$. From equivalence of $v_1$ and $v_2$ we see that it $v_2$ has a descendant $v'_2$ which is equivalent to $v'_1$. Using  Property~$(\RR_2)$ and the induction hypothesis, we show that $v'_2$ is actually a son of $v_2$. Using Property~$(\RR_3)$, we get a bijection from the set of sons of $v_1$ and the set of sons of $v_2$ which preserves the isomorphism type of the generated trees. This proves Claim~1.

\paragraph{Claim 2} Assume $p\ge 1$. If $(\AA_k)$ is true for every $k<p$, and $(\BB_k)$ is true for every $k\le p$, then $(\AA_p)$ is true.

Arguing by contradiction as in the proof of Lemma~\ref{lem:zero} and taking a counterexample $(T,v)$ with $v$ of minimal combinatorial depth, we may assume the following: $v$ has depth $p$; it has at least one son $v_1$ of depth $p$, possibly other sons of depth $p$, and possibly other sons of depth $<p$. Furthermore, all descendants of the $v_k$'s have depth $<p$.

Let $x$ be a point of $L_{v}$. Then $x$ is not isolated in $L(T)^{\ge p}$, so there exists a monochromatic Cantor neighborhood $K\subset L(T)^{\ge p}$ all of whose points are equivalent in $L(T)$. Moreover, $v_1$ is equivalent to $v$. Using Properties~$(\RR_3)$ and~$(\BB_p)$ we see that $v$ has only one son of depth $p$.

The possibility that $v$ should have a son of depth $<p$ is ruled out as in the planar case. Hence $v_1$ is the only son of $v$. We now discuss according to the type of $v_1$ and show that each case leads to a contradiction.

\paragraph{Case 1} $v_1$ has type $\oo$, $\oo^h$, or $\oo^c$.

Then $v$ has depth $p+1$.

\paragraph{Case 2} $v_1$ has type $\Theta^c$.

Then by coherence, $v_1$ has type $\oo^c$ or $\Theta^c$, so the tree is not reduced (using Move~1'.)

\paragraph{Case 3} $v_1$ has type $\Theta^h$.

Then by coherence, $v$ has type either in $\{\oo^h,\Theta^h\}$, in which case the tree is not reduced, or in $\{\oo^c,\Theta^c\}$, in which case $v$ has depth $p+1$.

\paragraph{Case 4} $v_1$ has type $\Theta$.

Then $v$ has type either in $\{\oo^h,\Theta^h,\oo^c,\Theta^c\}$, in which case the tree is not reduced, or in $\{\oo,\Theta\}$, in which case $v$ has depth $p+1$.
\end{proof}

Finally, one deduces Theorem~\ref{thm:reduced general} from Proposition~\ref{prop:keygen} in the same way that Theorem~\ref{thm:reduced} was deduced from Proposition~\ref{prop:key}.

\section{Concluding remarks}

\begin{rem}
Our definition of a topological automaton is rather restrictive. It is possible to broaden it, for instance by removing the restriction that $k\le l$ in the last condition. This creates technical problems, but does not seem to enlarge the class of manifolds significantly.

At one extreme, one may think of associating an $n$-dimensional manifold to a Turing machine. One simple-minded way to do this is to start with a $n$-disk; each time the Turing machine does something, add an $n$-annulus; if the machine stops, glue in a $n$-disk. Then the resulting $n$-manifold is compact if and only if the Turing machine stops. Since the halting problem for Turing machines is undecidable, the homeomorphism problem for $n$-manifolds arising from this construction is undecidable.

When $n=1$, given the fact that there are only two $1$-manifolds up to homeomorphism, and that they are both automata $1$-manifolds, it is tempting to dismiss this phenomenon as an artefact caused by the use of Turing machines. In higher dimensions, however, this should probably be taken seriously. Notions of \emph{computable manifolds} have been proposed; we refer to work of Calvert-Miller~\cite{cm:computable} and Aguilar-Conde~\cite{ac:computable} and the references therein for further discussion.

It would be interesting to define intermediate classes between automata surfaces and computable surfaces and determine whether the homeomorphism problem---or other algorthmic problems---are decidable for them.
\end{rem}

\begin{rem}
Defining the \bydef{complexity} of an automaton surface $F$ as the number of ordinary vertices of the unique reduced admissible tree corresponding to $F$, it is possible to enumerate automata surfaces in order of increasing complexity in the spirit of S.~Matveev's enumeration of 3-manifolds~\cite{matveev:complexity}. We hope that this could be useful to test conjectures or search for a counterexample in a systematic way.

Unsurprisingly, the examples mentioned in the introduction have low complexity. The first infinite-type planar surface appearing in this enumeration that currently does not have a name has a Cantor set of ends, all of whose points are limits of sequences of isolated ends. Perhaps it should be called the Cantor flute surface?
\end{rem}

\begin{rem}
For $n\ge 4$, the homeomorphism problem for automata $n$-manifolds is of course undecidable, since this is already the case for compact $n$-manifolds by Markov's theorem. For $n=3$ it is an open question; in fact it is already open in the case of 1-ended 3-manifolds (presented by topological 3-automata with only two states and two arrows). We refer the interested reader to the paper~\cite{maillot:one} for further discussion.
\end{rem}

\appendix
\section{Uncountably many surfaces}\label{sec:appendix}

In this appendix we prove the following proposition.\footnote{Since writing the first version of this article, we have found the reference~\cite{reichbach:power}. We have decided to keep this appendix since it is referred to in the text.}
\begin{prop}\label{prop:uncountable}
There are uncountably many planar surfaces up to homeomorphism.
\end{prop}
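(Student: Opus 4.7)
By the Kerekjarto--Richards classification theorem recalled in the introduction, a planar surface without boundary is determined up to homeomorphism by its space of ends, which is a compact, metrizable, totally disconnected (hence zero-dimensional) space. It therefore suffices to exhibit uncountably many pairwise non-homeomorphic spaces of this kind, each realized as the space of ends of some planar surface.

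For the first step, I would take, for each countable ordinal $\alpha$, the ordinal space $X_\alpha = \omega^\alpha + 1$ equipped with the order topology. Each $X_\alpha$ is compact (being a successor ordinal), countable, Hausdorff, hence metrizable, and totally disconnected. The Cantor--Bendixson derivative is a topological invariant, and a standard computation (Mazurkiewicz--Sierpi\'nski) shows that the Cantor--Bendixson rank of $X_\alpha$ equals $\alpha$. Since there are uncountably many countable ordinals, the family $\{X_\alpha\}$ gives uncountably many pairwise non-homeomorphic compact totally disconnected metrizable spaces.

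Next I would realize each $X_\alpha$ as the space of ends of a planar surface. Every compact metrizable totally disconnected space embeds as a closed subset of the standard middle-thirds Cantor set $\calF \subset [0,1]$, which in turn embeds in $\Ss^2$. Fix such an embedding and let $C_\alpha \subset \Ss^2$ denote the image of $X_\alpha$, and set $F_\alpha := \Ss^2 \setminus C_\alpha$. Then $F_\alpha$ is a connected open planar surface, and I would prove the classical fact that the space of ends of $F_\alpha$ is canonically homeomorphic to $C_\alpha$: for each clopen subset $V$ of $C_\alpha$, use total disconnectedness to choose a neighborhood $U$ of $V$ in $\Ss^2$ with $\overline{U}\cap C_\alpha = V$ and $U \setminus V$ connected, so that the assignment $V \mapsto U \setminus V$ provides a cofinal system of connected neighborhoods of infinity, whose inverse limit (as $V$ shrinks through a basis of clopen neighborhoods of points of $C_\alpha$) recovers $C_\alpha$.

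The only genuine technical point is the last verification, namely that $E(\Ss^2 \setminus C) \cong C$ for a compact totally disconnected $C \subset \Ss^2$; this is where one must be slightly careful in choosing the neighborhoods $U$ so that $U\setminus V$ has a single connected component (which can be arranged by covering $V$ with finitely many disjoint small open disks and then taking connected sums along short arcs inside $\Ss^2 \setminus C$, or more directly by a Sch\"onflies-type argument). Once this is established, the $F_\alpha$ are pairwise non-homeomorphic planar surfaces indexed by the uncountable set of countable ordinals, completing the proof.
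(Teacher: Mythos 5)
Your proof is correct but takes a genuinely different route. The paper uses a construction credited to Gilbert Levitt: fix a Cantor set $C$ and a sequence of distinct points $\bar x_n\in C$, and for each binary sequence $\aa=(a_n)\in\{0,1\}^\nn$ build a compact totally disconnected space $A_\aa$ by attaching, at each $\bar x_n$ with $a_n=1$, a compact countable decoration $X_n$ whose Cantor--Bendixson rank grows with $n$. Any homeomorphism must preserve the set $C$ of condensation points and the CB rank at each point, so it recovers $\aa$; this gives $2^{\aleph_0}$ pairwise non-homeomorphic spaces. You instead use the countable ordinal spaces $\omega^\alpha+1$ for countable $\alpha$, distinguished by CB rank, which yields $\aleph_1$ examples --- still uncountable, though a priori fewer than the paper's unless CH holds. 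You are also more explicit about the realization step $E(\Ss^2\setminus C)\cong C$, which the paper leaves implicit (its proof stops after producing the abstract end spaces); that is a gain in completeness. The tradeoff is that the ordinal route imports transfinite machinery (the Mazurkiewicz--Sierpi\'nski classification of countable compacta), whereas the paper's construction is elementary and finitary in flavor, needing only the observation that a countable wedge of pieces of increasing CB rank attached to a Cantor set encodes a binary sequence.
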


\begin{proof}
We use a construction which was shown to us by Gilbert Levitt. Let $C$ be a Cantor set embedded in $[0,1]\times \{0\} \subset \rr^2$. Fix an arbitrary injective convergent sequence $((x_n,0))_{n\in\nn}$ of points of $C$ and let $(x_\infty,0)\in C$ be its limit. For each natural number $n$, fix an increasing embedding $\iota_n:\omega^n+1\to [0,1]$ such that $1$ belongs to the image of $\iota_n$; define an embedding $\sigma_n$ of $[0,1]$ into $\rr^2$ by setting $\sigma_n(t)=(x_n,(1-t)/(n+1))$; finally let $X_n$ be the image of $\sigma_n \circ \iota_n$.

We now define a map $\phi:\{0,1\}^\nn \to \mathcal{P}(\rr^2)$ by the formula $$\phi( (a_n)_{n\in\nn}) := C \cup \bigcup_{n\in\nn, a_n=1} X_n.$$ 

Note that for each $\mathbf{a}=(a_n)_{n\in\nn}$, the space $\phi(\mathbf{a})$ is compact and totally disconnected. (For compactness, observe that $C$ and every finite union of $X_n$ are compact, while if $(P_k)$ is a sequence of points of $\phi(\mathbf{a})$ that hits $X_n$ for infinitely many $n$, then it admits a subsequence that converges to $(x_\infty,0)$, since the diameter of $X_n$ tends to $0$ as $n$ tends to infinity.)

\begin{lem}
Let $\textbf{a},\textbf{b}\in \{0,1\}^\nn$. If $\phi(\textbf{a})$ and $\phi(\textbf{b})$ are homeomorphic, then $\textbf{a}=\textbf{b}$.
\end{lem}

\begin{proof}
Let $f$ be a homeomorphism from $\phi(\textbf{a})$ to $\phi(\textbf{b})$.
Recall that a point in a totally disconnected space is called a \bydef{condensation point} if all of its neighborhoods are uncountable. Observe that the set of condensation points of $\phi(\textbf{a})$ (resp.~$\phi(\textbf{b})$) is exactly $C$. Hence $f(C)=C$. The other points of $\phi(\textbf{a})$ have finite Cantor-Bendixson rank, and the rank is preserved by $f$, i.e.~$f$ sends isolated points to isolated points, limits of sequences of isolated points to similar points etc. Thus if $a_n=1$ for some $n$, then $f((x_n,0))=(x_n,0)$ and $b_n=1$. Conversely, if $b_n=1$ then $a_n=1$.
\end{proof}

Proposition~\ref{prop:uncountable} now follows since for every $\textbf{a}\in \{0,1\}^\nn$ the surface $\rr^2\setminus \phi(\textbf{a})$ has $\phi(\textbf{a}) \sqcup \{*\}$ as space of ends.

\end{proof}

\bibliographystyle{abbrv}
\bibliography{hom}

Institut Montpelli\'erain Alexander Grothendieck,
CNRS - Universit\'e de Montpellier.\\ 
\texttt{sylvain.maillot@umontpellier.fr}

\end{document}